\newtheorem{theorem}{Theorem}[section]
\newtheorem{lemma}[theorem]{Lemma}
\newtheorem{example}[theorem]{Example}
\newtheorem{proposition}[theorem]{Proposition}
\theoremstyle{definition}
\newtheorem{definition}[theorem]{Definition}
\newtheorem{remark}[theorem]{Remark}
\newtheorem{corollary}[theorem]{Corollary}
\numberwithin{equation}{section}
\begin{document}

\vspace{0.5in}

\renewcommand{\bf}{\bfseries}
\renewcommand{\sc}{\scshape}
\vspace{0.5in}

\title[Cross-sections of Milnor fibrations]%
{Cross-sections of Milnor fibrations and Motion planning \\ }

\author{Cesar A. Ipanaque Zapata}
\address{Departamento de Matem\'{a}tica,UNIVERSIDADE DE S\~{A}O PAULO
INSTITUTO DE CI\^{E}NCIAS MATEM\'{A}TICAS E DE COMPUTA\c{C}\~{A}O -
USP , Avenida Trabalhador S\~{a}o-carlense, 400 - Centro CEP:
13566-590 - S\~{a}o Carlos - SP, Brasil}
\curraddr{Departamento de Matem\'{a}ticas, CENTRO DE INVESTIGACI\'{O}N Y DE ESTUDIOS AVANZADOS DEL I. P. N.
Av. Instituto Polit\'{e}cnico Nacional n\'{u}mero 2508,
San Pedro Zacatenco, Mexico City 07000, M\'{e}xico}
\email{cesarzapata@usp.br}


\subjclass[2010]{Primary 32S55, 68T40; Secondary 52C35,70Q05}                                    %

\keywords{Milnor fibration, cross-section, sectional category, robotics, algorithms}
\thanks {The author wishes to acknowledge support for this research from grant\#2018/23678-6 and grant2016/18714-8, S\~ao Paulo Research Foundation (FAPESP)}


\begin{abstract} Consider a fibration \[p:E\to S^{p-1}\] with fiber $F$. We have the following natural question: Under what conditions does this fibration admit a cross-section? Our purpose is to discuss this problem when the fibration $p$ is the Milnor fibration $f_{\mid}:f^{-1}(S^{p-1}_{\delta})\cap D^{n}_{\epsilon}\to S^{p-1}_{\delta}$ with Milnor fiber $F_f$ and the Milnor fibration of arrangements $Q:\mathcal{M}(\mathcal{A})\to \mathbb{C}^\ast$ with fiber $F=F(\mathcal{A})$. Furthermore, we use our results to study the tasking planning problem for the Milnor fibration as a work map and we give the tasking algorithms.  
\end{abstract}

\maketitle


\section{\bf Introduction}

Let $f:(\mathbb{R}^n,0)\longrightarrow (\mathbb{R}^p,0)$, $n> p\geq 2$, be an analytic map germ which satisfies the Milnor's conditions $(a)$ and $(b)$ (as an example, if $f$ has an isolated singularity at the origin), so one has the Milnor fibration (see \cite{dutertre2016}, Corollary 2.5),
\begin{equation}\label{milnor-fibration}
   f_{\mid}:f^{-1}(S^{p-1}_{\delta})\cap D^{n}_{\epsilon}\longrightarrow S^{p-1}_{\delta},  
\end{equation}
where $0<\delta\ll\epsilon\leq \epsilon_0,$ and $\epsilon_0$ is a Milnor's radius for $f$ at origin. 

\begin{figure}[!h]
    \centering
    \includegraphics[scale=0.6]{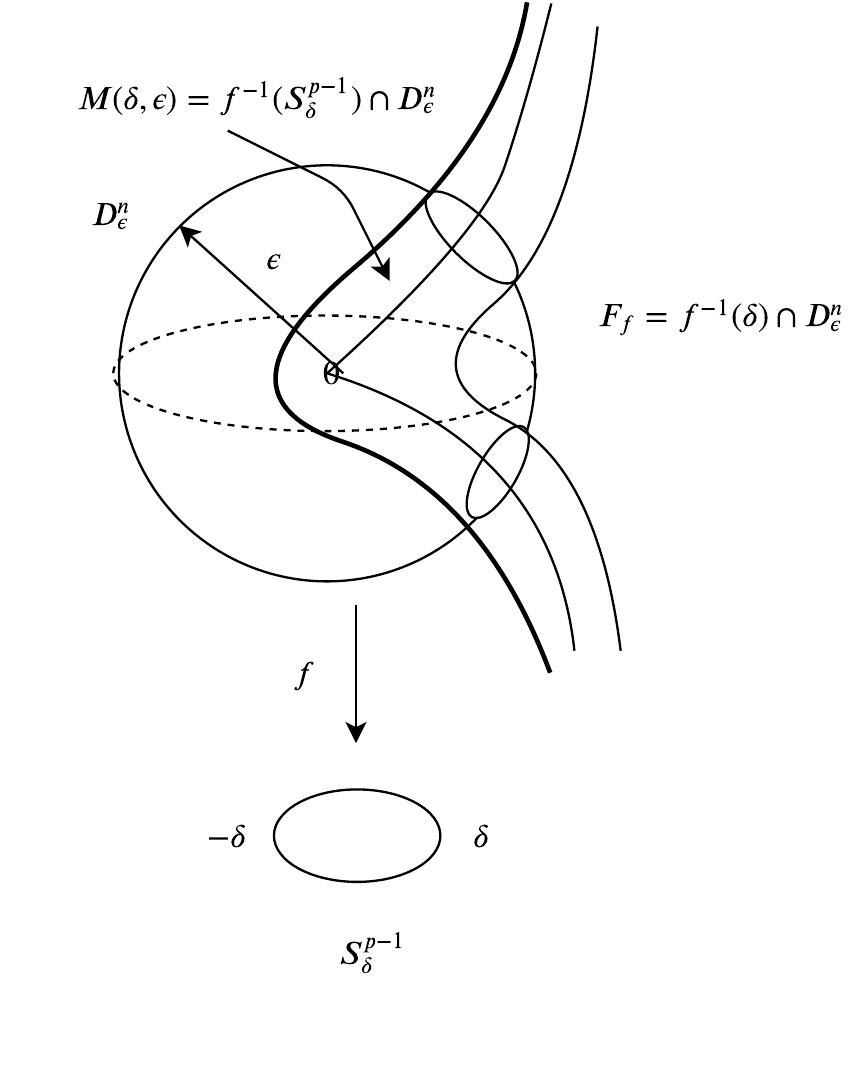}
    \caption{Milnor fibration}
    \label{fig:milnor-fibratio}
\end{figure}

Denote the \textit{Milnor tube} by $M(\delta,\epsilon)=f^{-1}(S^{p-1}_{\delta})\cap D^{n}_{\epsilon}$, and note that it has $dim(M(\delta,\epsilon))=n-1$.

Its fiber $f^{-1}(\delta)$ is called \textit{the Milnor fiber} denoted by $F_f$, which is a compact manifold with boundary. We have $dim~F_f=n-p$ and $\partial F_f$ is diffeomorphic to $K$, where $K:=f^{-1}(0)\cap S^{n-1}_{\epsilon}$ is the \textit{Milnor link}, which $dim~K=n-p-1$.

\begin{example}
If $f:(\mathbb{C}^n,0)\to (\mathbb{C},0)$ is a holomorphic function germ, then it satisfies the Milnor's conditions $(a)$ and $(b)$.

Consider the polynomial map $f:\mathbb{R}^3\to \mathbb{R}^2,~f(x,y,z)=(x,y)$. On has $\nabla f_1=(1,0,0)$ and $\nabla f_2=(0,1,0)$, so there is not critical points. Hence, $f$ satisfies the Milnor's conditions $(a)$ and $(b)$.
\end{example}

Let $p:E\to B$ be a continuous surjection.  A \textit{(homotopy) cross-section} or \textit{section} of $p$ is a (homotopy) right inverse of $p$, i.e., a map $s:B\to E$, such that $p\circ s =1_B$ ($p\circ s \simeq 1_B$). Moreover, given a subspace $A\subset B$, a \textit{(homotopy) local section} of $p$ over $A$ is a (homotopy) section of the restriction map $p_|:p^{-1}(A)\to A$, i.e., a map $s:A\to E$, such that $p\circ s = incl_A$ ($p\circ s \simeq incl_A$).

Since $\mathbb{R}^p$ is contractible, we have that $f$ admits a homotopy continuous cross-section, i.e., a continuous map $s:\mathbb{R}^p\to \mathbb{R}^n$ such that $ps\simeq 1_{\mathbb{R}^p}$. 

On the other hand, since $\text{cat}(S^{p-1}_{\delta})=2$, we have the Scharwz genus $\varrho(f_\mid)\leq 2$. Thus $f_\mid$ admits at most $2$ local continuous sections (see Section \ref{section2}).

Milnor fibration $f_{\mid}$ is \textit{trivial} if $F_f$ is homeomorphic to the unit disk $D^{n-p}$. For example, the projection map germ has trivial Milnor fibration. Thus, trivial Milnor fibration admits a (global) continuous cross-section and so $\varrho(f_{\mid})=1$.

Let $\mathcal{A}$ be a central arrangement in $\mathbb{C}^{d+1}$ (see Section \ref{section-arrangement}). For each hyperplane $H\in\mathcal{A}$, let $L_H:\mathbb{C}^{d+1}\to\mathbb{C}$ be a linear form with kernel $H$. The product $$Q:=Q(\mathcal{A})=\prod_{H\in\mathcal{A}}L_H,$$ then, is a homogeneous polynomial of degree equal to $\mid\mathcal{A}\mid$, the cardinality of the set $\mathcal{A}$.

The complement of the arrangement $$M:=M(\mathcal{A})=\mathbb{C}^{d+1}-\bigcup_{H\in\mathcal{A}}H$$ is a connected, smooth complex variety. 

The polynomial map \[
    Q:M(\mathcal{A})\to \mathbb{C}^\ast
\] is the projection map of a smooth, locally trivial bundle, known as the \textit{Milnor fibration} of the arrangement. Its fiber \[F:=F(\mathcal{A})=Q^{-1}(1)\] is called the \textit{Milnor fiber} of the arrangement. 

 In this work, we prove, in an simple way, that the Milnor fibrations:
 \begin{itemize}
     \item $f_{\mid}:f^{-1}(S^{1}_{\delta})\cap D^{2n+2}_{\epsilon}\to S^{1}_{\delta}$ where $f:(\mathbb{C}^n,0)\to (\mathbb{C},0)$ is a holomorphic function germ (see Proposition \ref{prop-cross-complex}),
     \item $ f_{\mid}:f^{-1}(S^{p-1}_{\delta})\cap D^{n}_{\epsilon}\to S^{p-1}_{\delta}$ where $f:(\mathbb{R}^n,0)\to (\mathbb{R}^p,0)$, $n>p\geq 2$, is an analytic function germ with an isolated singular point at origin (see Proposition \ref{prop-isolated-singularity}),
     \item $Q:M(\mathcal{A})\to\mathbb{C}^\ast$ where $\mathcal{A}$ is a central arrangement in $\mathbb{C}^{d+1}$ (see Proposition \ref{prop-cross-arrangement}),
 \end{itemize} 
  admit continuous (global) cross-sections. Furthermore, we use our results to study the tasking planning problem for the Milnor fibration as a work map and we give the tasking algorithms (see Section \ref{tasking}).

\begin{remark}
The author doesn't know if in general the Milnor fibration $f_{\mid}:f^{-1}(S^{p-1}_{\delta})\cap D^{n}_{\epsilon}\longrightarrow S^{p-1}_{\delta}$ admits a continuous cross-section, for any  $f:(\mathbb{R}^n,0)\longrightarrow (\mathbb{R}^p,0)$, $n> p\geq 2$, analytic map germ which satisfies the Milnor's conditions $(a)$ and $(b)$.
\end{remark}

\section{\bf The Schwarz genus of a map}\label{section2}

In this section we will recall the notion of Schwarz genus and basic results about this numerical invariant (see \cite{schwarz1958genus}). 

Let $p:E\to B$ be a continuous surjection.  A \textit{(homotopy) cross-section} or \textit{section} of $p$ is a (homotopy) right inverse of $p$, i.e., a map $s:B\to E$, such that $p\circ s = 1_B$ ($p\circ s \simeq 1_B$). Moreover, given a subspace $A\subset B$, a \textit{(homotopy) local section} of $p$ over $A$ is a (homotopy) section of the restriction map $p_|:p^{-1}(A)\to A$, i.e., a map $s:A\to E$, such that $p\circ s = incl_A$ ($p\circ s \simeq incl_A$).

\begin{definition}
The \textit{Schwarz genus} or \textit{sectional category} of a surjective continuous map $p:E\to B$ is the smallest number $n$ such that $B$ can be covered by $n$ open subsets $U_1,\ldots,U_n$ with the property that for every $1\leq i\leq n$ there exists a homotopy local continuous cross-section $s_i:U_i\longrightarrow E$, i.e., $p\circ s_i$ is homotopic to the inclusion map $incl_{U_i}:U_i\to B$. The Schwarz genus of a map $p:E\to B$  will be denoted by $\varrho (p)$.
\end{definition}

We recall, that the \textit{standard sectional number} is the minimal number of elements in an open cover of $B$, such that each element admits  a  continuous  local  section  of $p$. Let  us  denote  this quantity  as $\text{sec}_{\text{op}}(p)$.

The following statement is well-known, however, we present a proof. This of course suits best our implementation-oriented objectives.

\begin{lemma}\label{pullback}
Let $p:E\to B$ be a continuous surjection. If the following square
\begin{eqnarray*}
\xymatrix{ E^\prime \ar[r]^{\,\,} \ar[d]_{p^\prime} & E \ar[d]^{p} & \\
       B^\prime  \ar[r]_{\,\, f} &  B &}
\end{eqnarray*}
is a pullback. Then $\text{sec}_{\text{op}}(p^\prime)\leq \text{sec}_{\text{op}}(p)$.
\end{lemma}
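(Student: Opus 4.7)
The plan is to transport the open cover with local sections from $B$ up to $B'$ via the pullback construction. Suppose $\text{sec}_{\text{op}}(p)=n$, and let $U_1,\dots,U_n$ be an open cover of $B$ with continuous local sections $s_i:U_i\to E$ satisfying $p\circ s_i=\text{incl}_{U_i}$. The natural candidate for an open cover of $B'$ is $V_i:=f^{-1}(U_i)$; since $f$ is continuous each $V_i$ is open, and since the $U_i$ cover $B$ the $V_i$ cover $B'$.

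Next I would produce the local sections $s_i':V_i\to E'$ by invoking the universal property of the pullback. For each $i$, I have two compatible maps out of $V_i$: the inclusion $\iota_i:V_i\hookrightarrow B'$ and the composite $s_i\circ(f|_{V_i}):V_i\to E$. These are compatible in the sense needed, because
\[
p\circ s_i\circ(f|_{V_i})=\text{incl}_{U_i}\circ(f|_{V_i})=f\circ\iota_i.
\]
The universal property of the pullback square then yields a unique continuous map $s_i':V_i\to E'$ such that $p'\circ s_i'=\iota_i$ and the other projection composes to $s_i\circ(f|_{V_i})$. The first identity is precisely the statement that $s_i'$ is a local section of $p'$ over $V_i$.

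Putting this together, $\{V_i\}_{i=1}^{n}$ is an open cover of $B'$ equipped with continuous local sections of $p'$, so $\text{sec}_{\text{op}}(p')\leq n=\text{sec}_{\text{op}}(p)$. There is no real obstacle here; the only point that deserves a brief comment in the proof is that the $V_i$ actually cover $B'$ (immediate from surjectivity of $f$ onto its image and the fact that the $U_i$ cover $B$), and that the map $s_i'$ produced by the pullback property is continuous, which is automatic since both feeding maps are continuous.
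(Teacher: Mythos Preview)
Your proof is correct and follows essentially the same approach as the paper's: pull back the open cover via $V_i=f^{-1}(U_i)$ and use the universal property of the pullback to produce local sections of $p'$. The only cosmetic difference is that the paper first passes through the concrete model $B'\times_B E$ and the comparison map $H:B'\times_B E\to E'$, whereas you invoke the universal property directly; the content is the same.
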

\begin{proof}
 Consider the following diagram
\begin{eqnarray*}
\xymatrix{ B^\prime\times_B E \ar@/^10pt/[drr]^{\,\,q_2} \ar@/_10pt/[ddr]_{q_1} \ar@{-->}[dr]_{H} &   &  &\\
& E^\prime \ar[r]^{\,\,} \ar[d]_{p^\prime} & E \ar[d]^{p} & \\
       & B^\prime  \ar[r]_{\,\, f} &  B &}
\end{eqnarray*} where $B^\prime\times_B E=\{(b^\prime,e)\in B^\prime\times E\mid~f(b^\prime)=p(e)\}$ is the \textit{standard pullback}. The map $q_i$ denotes the projection onto its $i-$th coordinate. Let $H:B^\prime\times_B E\to E^\prime$ be a continuous map which makes the triangles commutative, in particular, $p^\prime H=q_1$.

Suppose $U\subset B$ admits a continuous local section of $p$, say $\alpha:U\to E$. Set $V=f^{-1}(U)\subset B^\prime$ and  defines the continuous map $\beta:V\to B^\prime\times_B E$ given by $\beta(b^\prime)=(b^\prime,\alpha(f(b^\prime)))$. It is a local section of $q_1$ over $V$. Then $H\circ \beta$ is a continuous local section of $p^\prime$ over $V$. Thus $\text{sec}_{\text{op}}(p^\prime)\leq \text{sec}_{\text{op}}(p)$.
\end{proof}

\begin{example}
Let $p:E\longrightarrow B$ be a surjective map. If $B$ is contractible, then the Schwarz genus $\varrho(p)=1$. In fact, let $H:B\times [0,1]\longrightarrow B$ be a homotopy with $H_0=1_Y$ and $H_1$ is the constant map $y_0$, where $y_0\in B $. We take $x_0\in E$ such that $p(x_0)=y_0$ and we consider $s:B\to E$ given by $s(y):=x_0$. We note that $1_B\stackrel{H}{\simeq}p\circ s$, thus $p$ admits a homotopy continuous cross-section.
\end{example} 

It is easy to see that for any continuous surjective map $p:E\to B$, it satisfies $\varrho (p)\leq \text{sec}_{\text{op}}(p)$.

\begin{remark}[Sectional category of a fibration]
For a fibration $p:E\longrightarrow B$ the Schwarz genus coincides with the sectional number $sec_{\text{op}}(p)$, i.e., it is the smallest number $n$ such that $B$ can be covered by $n$ open subsets $U_1,\ldots,U_n$ with the property that for each $1\leq i\leq n$ there exists a local continuous cross-section $s_i:U_i\longrightarrow X$ (i.e., $p\circ s_i=incl_{U_i}$).
\end{remark}

The \textit{Lusternik-Schnirelmann category} (LS category) or category of a topological space $X$, denoted cat$(X)$, is the least integer $n$ such that $X$ can be covered with $n$ open sets, say $U_1,\ldots,U_n$, which are all contractible within $X$. Such covering $U_1,\ldots,U_n$ is called \textit{categorical}. We have $\text{cat}(X)=1$ iff $X$ is contractible. The LS category is a homotopy invariant, i.e., if $X$ is homotopy equivalent to $Y$ (we shall denote $X\simeq Y$) then $\text{cat}(X)=\text{cat}(Y)$ (see \cite{cornea2003lusternik} for more properties).

The Schwarz genus satisfies the following basic properties.
\begin{lemma}\emph{(\cite{cornea2003lusternik}, Proposition 9.14)}
Let $p:E\longrightarrow B$ be a fibration. Then
\begin{enumerate}
    \item $\varrho(p)\leq cat(B)$.
    \item If $E$ is contractible, then $\varrho(p)= cat(B)$.
    \item If there are $x_1,\ldots,x_k\in H^\ast(B;R)$ (any coefficient ring $R$) with \[p^\ast(x_1)=\cdots=p^\ast(x_k)=0 \text{ and } x_1\cup\cdots\cup x_k\neq 0,\] \noindent then \[\varrho(p)\geq k+1.\]
\end{enumerate}
\end{lemma}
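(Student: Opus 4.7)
The plan is to prove the three parts separately, using only the fibration property of $p$ (in particular, the homotopy lifting property) together with elementary properties of categorical covers and the cohomology contravariant functor.

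For part (1), I would start with a categorical cover $U_1,\ldots,U_n$ of $B$ with $n=\mathrm{cat}(B)$. For each $i$, the inclusion $\mathrm{incl}_{U_i}:U_i\hookrightarrow B$ is null-homotopic in $B$, say homotopic to a constant map at some point $b_0$. Pick any $e_0\in p^{-1}(b_0)$ and consider the constant map $\widetilde{c}:U_i\to E$ at $e_0$; it lifts the constant map $c_{b_0}$. Applying the homotopy lifting property of the fibration $p$ to the null-homotopy of $\mathrm{incl}_{U_i}$, I obtain a map $s_i:U_i\to E$ with $p\circ s_i=\mathrm{incl}_{U_i}$, and in particular $p\circ s_i\simeq \mathrm{incl}_{U_i}$. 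This gives $\varrho(p)\leq\mathrm{cat}(B)$.

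For part (2), the inequality $\varrho(p)\leq \mathrm{cat}(B)$ is (1). For the reverse, assume $E$ is contractible and let $U_1,\ldots,U_n$ be an open cover of $B$ with $n=\varrho(p)$, equipped with local homotopy sections $s_i:U_i\to E$. Since $E$ is contractible, each $s_i$ is null-homotopic in $E$, hence $\mathrm{incl}_{U_i}\simeq p\circ s_i\simeq \text{constant}$, showing $U_i$ is contractible in $B$. Therefore $\{U_i\}$ is a categorical cover and $\mathrm{cat}(B)\leq\varrho(p)$.

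For part (3), I argue by contradiction: suppose $\varrho(p)\leq k$, so there is an open cover $U_1,\ldots,U_k$ of $B$ with local homotopy sections $s_i:U_i\to E$ satisfying $p\circ s_i\simeq \mathrm{incl}_{U_i}$. Applying cohomology, $(\mathrm{incl}_{U_i})^{\ast}(x_i)=s_i^{\ast}p^{\ast}(x_i)=0$, so $x_i$ lies in the kernel of $H^{\ast}(B;R)\to H^{\ast}(U_i;R)$. By exactness of the long sequence of the pair $(B,U_i)$, we can write $x_i=j_i^{\ast}(y_i)$ for some $y_i\in H^{\ast}(B,U_i;R)$, where $j_i:(B,\emptyset)\to (B,U_i)$. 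Taking the relative cup product,
\[
x_1\cup\cdots\cup x_k=j^{\ast}(y_1\cup\cdots\cup y_k)\in H^{\ast}\bigl(B,U_1\cup\cdots\cup U_k;R\bigr),
\]
but $U_1\cup\cdots\cup U_k=B$, so the target group is zero. This contradicts $x_1\cup\cdots\cup x_k\neq 0$, forcing $\varrho(p)\geq k+1$.

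I expect no genuine obstacle here, since all three statements are standard. The main subtlety to be careful about is in (3), where one must verify that the relative cup product lands in $H^{\ast}(B,U_1\cup\cdots\cup U_k;R)$ (this uses the standard naturality and product structure of relative cohomology, or equivalently the formulation via the reduced cohomology of pairs); and in (1), where the use of the homotopy lifting property requires $p$ to be a genuine fibration, which is the standing hypothesis. The whole result is precisely Proposition~9.14 of \cite{cornea2003lusternik}, and the sketch above reproduces the argument given there.
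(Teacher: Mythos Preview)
The paper does not supply its own proof of this lemma; it simply records the statement and cites \cite{cornea2003lusternik}, Proposition~9.14. Your proposal is correct and reproduces the standard argument found in that reference: the homotopy lifting property turns a categorical cover into a cover by sets admitting local sections for (1); contractibility of $E$ forces the converse for (2); and the relative cup product trick yields the cohomological lower bound in (3). There is nothing to compare beyond noting that your sketch is exactly the classical proof the paper is pointing to.
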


\begin{remark}
The converse of the item $(2)$ is not true in general, for example the Hopf fibration $S^1\hookrightarrow S^3\longrightarrow S^2$ does not admit a continuous cross-section and thus $\varrho(p)=2=\text{cat}(S^2)$.
\end{remark}

\begin{corollary}
Let $p:E\to B$ be a fibration. If $\varrho(p)=1$ then $p^\ast:H^\ast(B;R)\to H^\ast(E;R)$ is injective.
\end{corollary}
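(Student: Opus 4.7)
The plan is to unpack the hypothesis $\varrho(p)=1$ to extract a homotopy right inverse of $p$, and then pass to cohomology. By the definition of Schwarz genus, $\varrho(p)=1$ means that $B$ itself (as a one-element cover of $B$) admits a homotopy local continuous cross-section $s\colon B\to E$ with $p\circ s\simeq 1_B$. (In fact, by the Remark on sectional category of fibrations, one even gets a strict section with $p\circ s=1_B$, but the weaker homotopy version is all that is needed.)

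Next I would apply the contravariant cohomology functor $H^\ast(-;R)$ to the homotopy $p\circ s\simeq 1_B$. By the homotopy invariance of singular cohomology, homotopic maps induce equal homomorphisms, so
\begin{equation*}
s^\ast\circ p^\ast \;=\; (p\circ s)^\ast \;=\; (1_B)^\ast \;=\; 1_{H^\ast(B;R)}.
\end{equation*}

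Finally, since $p^\ast$ admits a left inverse $s^\ast$ as a homomorphism of graded $R$-modules, it must be injective. This completes the proof. There is no substantive obstacle: the result is a formal consequence of the cohomological functoriality applied to the section produced by the Schwarz genus hypothesis, combined with the identification in the preceding Remark between $\varrho(p)$ and $\sec_{\mathrm{op}}(p)$ for fibrations.
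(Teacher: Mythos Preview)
Your proof is correct. The paper gives no explicit argument for this Corollary; its placement immediately after the cited Lemma (Proposition~9.14 of Cornea et al.) indicates that the intended justification is the contrapositive of item~(3): if $p^\ast$ failed to be injective, a nonzero class $x$ with $p^\ast(x)=0$ would give, with $k=1$, the bound $\varrho(p)\geq 2$.

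Your route is genuinely different and more elementary: you extract the (homotopy) section directly from the hypothesis $\varrho(p)=1$ and apply functoriality and homotopy invariance of cohomology to obtain a left inverse $s^\ast$ of $p^\ast$. This avoids invoking the cup-length lower bound altogether and works for any homotopy-invariant contravariant functor, not just cohomology with coefficients in a ring. The paper's approach, by contrast, packages the argument through the general Schwarz--genus machinery, which is heavier but has the advantage of yielding the full quantitative statement~(3) rather than just the $k=1$ case you need here.
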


\begin{example}
Let $p:E\to S^m$ be a fibration. If $\varrho(p)=1$ then $p^\ast:H^m(S^m;R)\longrightarrow H^m(E;R)$ is injective.
\end{example}

\begin{lemma}
Let $p:E\to B$ be a fibration. If $\varrho(p)=1$ then $p_\ast:H_\ast(E;R)\to H_\ast(B;R)$ or $p_\#:\pi_\ast(E)\to \pi_\ast(B)$ are surjective.
\end{lemma}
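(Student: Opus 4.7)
The plan is to unwind the hypothesis $\varrho(p)=1$ into the existence of a (homotopy) right inverse of $p$ and then apply the relevant functors. By the definition of Schwarz genus, $\varrho(p)=1$ means that $B$ itself is the single open set in the cover, and over it there is a map $s\colon B\to E$ with $p\circ s\simeq 1_B$. (In fact, because $p$ is a fibration, the earlier remark gives the stronger equality $p\circ s=1_B$, but only the homotopy version is needed.)

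Next I would apply the singular-homology functor $H_\ast(-;R)$ and the homotopy-group functor $\pi_\ast$ to the relation $p\circ s\simeq 1_B$. Since both functors send homotopic maps to equal morphisms, we obtain
\begin{equation*}
p_\ast\circ s_\ast=(p\circ s)_\ast=(1_B)_\ast=1_{H_\ast(B;R)},
\end{equation*}
and
\begin{equation*}
p_\#\circ s_\#=(p\circ s)_\#=(1_B)_\#=1_{\pi_\ast(B)}.
\end{equation*}

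From these identities, $s_\ast$ is a right inverse of $p_\ast$ and $s_\#$ is a right inverse of $p_\#$; therefore both $p_\ast$ and $p_\#$ admit sections as maps of abelian groups (respectively groups/pointed sets), which immediately forces them to be surjective. There is essentially no technical obstacle: the statement is a purely formal consequence of functoriality plus the homotopy invariance of $H_\ast$ and $\pi_\ast$, once one recognizes that $\varrho(p)=1$ is exactly the existence of a (homotopy) global section. The only small point to flag is the mild abuse in the statement's use of ``or'' — the argument establishes surjectivity for both functors simultaneously.
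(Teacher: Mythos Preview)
Your proof is correct and follows essentially the same approach as the paper: from $\varrho(p)=1$ you extract a homotopy section $s$ with $p\circ s\simeq 1_B$, apply the functors $H_\ast(-;R)$ and $\pi_\ast$, and conclude surjectivity from the resulting right inverses. The paper's argument is identical but more terse, and your remark about the ``or'' in the statement is apt---both surjectivities hold simultaneously.
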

\begin{proof}
If $p$ has a homotopy continuous cross-section $s:B\to E$, then $p\circ s\simeq1_B$ and $p_\ast\circ s_\ast=1_{H_\ast(B;R)}$ and $p_\#\circ s_\#=1_{\pi_\ast(B)}$. Hence, $p_\ast$ and $p_\#$ are surjective.
\end{proof}

\begin{remark}
Let $p:E\to B$ be a fibration with fiber $F$. If $B$ is simply-connected, then the induces map $i_\#:\pi_0(F,e_0)\to \pi_0(E,e_0)$ is a bijection. In particular, $F$ is path-connected if and only if $E$ is path-connected.
\end{remark}

\subsubsection{Fibrations with Schwarz genus one}

Using the  homotopy exact sequence to the fibration $F\stackrel{i}{\hookrightarrow} E\stackrel{p}{\to} B$,
\[\cdots\to\pi_{m+1}(B)\stackrel{\delta_\#}{\to}\pi_{m}(F)\stackrel{i_\#}{\to} \pi_m(E)\stackrel{p_\#}{\to} \pi_m(B)\stackrel{\delta_\#}{\to}\pi_{m-1}(F)\to \cdots\] one has the following statement.
\begin{lemma}\label{fibration-with-corsssection}
If $p$ admits a continuous cross-section $s:(B,b_0)\to (E,e_0)$, then
\begin{itemize}
    \item[(a)] $\pi_q(E,e_0)\cong \pi_q(F\times B,(e_0,b_0))$ for any $q\geq 2$.
    \item[(b)] $\pi_1(E,e_0)\cong \pi_1(F,e_0)\rtimes_\varphi\pi_1(B,b_0)$, where $\varphi:\pi_1(B,b_0)\to \text{Aut}(\pi_1(F,e_0))$ is given by \[(\varphi[\alpha])[\beta]=i_\#^{-1}\left(s_\#([\alpha])i_\#([\beta])s_\#([\alpha]^{-1})\right) \text{ for any } [\alpha]\in \pi_1(B,b_0) \text{ and } [\beta]\in \pi_1(F,e_0).\]
\end{itemize}
\end{lemma}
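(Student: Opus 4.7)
The plan is to run the long exact sequence of homotopy groups for $F\stackrel{i}{\hookrightarrow} E\stackrel{p}{\to} B$ and exploit the fact that a continuous cross-section $s$ produces, at every level $q\geq 1$, a splitting $s_\#:\pi_q(B,b_0)\to\pi_q(E,e_0)$ of $p_\#$. Since $p_\#\circ s_\#=\mathrm{id}$, the map $p_\#$ is surjective at every level. Applying this at level $q+1$ and using exactness
\[
\pi_{q+1}(E,e_0)\stackrel{p_\#}{\twoheadrightarrow}\pi_{q+1}(B,b_0)\stackrel{\delta_\#}{\to}\pi_q(F,e_0)\stackrel{i_\#}{\to}\pi_q(E,e_0),
\]
I conclude that $\delta_\#=0$, and hence $i_\#$ is injective at every level $q\geq 1$. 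Combined with the surjectivity of $p_\#$ at level $q$, this yields a short exact sequence
\[
1\longrightarrow\pi_q(F,e_0)\stackrel{i_\#}{\longrightarrow}\pi_q(E,e_0)\stackrel{p_\#}{\longrightarrow}\pi_q(B,b_0)\longrightarrow 1
\]
which is split by $s_\#$.

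For part (a), when $q\geq 2$ all groups involved are abelian, so a split short exact sequence of abelian groups gives an internal direct sum $\pi_q(E,e_0)\cong\pi_q(F,e_0)\oplus\pi_q(B,b_0)$. Combined with the well-known natural isomorphism $\pi_q(F\times B,(e_0,b_0))\cong\pi_q(F,e_0)\oplus\pi_q(B,b_0)$, induced by the two projections, this yields the claim.

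For part (b), at $q=1$ the short exact sequence is of (possibly nonabelian) groups, still split by $s_\#$. Standard group theory then identifies $\pi_1(E,e_0)$ with a semidirect product $\pi_1(F,e_0)\rtimes_\varphi\pi_1(B,b_0)$, where the action is given by conjugation by the chosen splitting. More precisely, for $[\alpha]\in\pi_1(B,b_0)$ and $[\beta]\in\pi_1(F,e_0)$, applying $p_\#$ to $s_\#([\alpha])\,i_\#([\beta])\,s_\#([\alpha]^{-1})$ yields $[\alpha][\alpha]^{-1}=1$, so this element lies in $\ker(p_\#)=\mathrm{Im}(i_\#)$; by the injectivity of $i_\#$ there is a unique preimage, and this is precisely $\varphi[\alpha][\beta]$ as stated.

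The only potential subtlety, which is really the main point, is the vanishing of the connecting homomorphism, which I would emphasize follows directly from the existence of the section at the next level. Everything else is a mechanical application of the splitting lemma, once in the abelian category (giving the direct sum identification in (a)) and once in the category of groups (giving the semidirect product structure and the conjugation formula in (b)).
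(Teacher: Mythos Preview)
Your proof is correct and follows precisely the approach the paper indicates: the paper does not give a detailed proof but simply records the long exact sequence of the fibration and states that the lemma follows from it. Your argument supplies exactly the missing details---surjectivity of $p_\#$ from the section, vanishing of $\delta_\#$, the resulting split short exact sequences, and the splitting lemma in the abelian and nonabelian cases---so there is nothing to add or correct.
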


Now, from the homotopy exact sequence to the fibration $F\stackrel{i}{\hookrightarrow} E\stackrel{p}{\to} S^m$,
\[\cdots\to \pi_m(E)\stackrel{p_\#}{\to} \pi_m(S^m)\stackrel{\delta_\#}{\to}\pi_{m-1}(F)\stackrel{i_\#}{\to}\pi_{m-1}(E)\to 1\] one has the following statement.
\begin{lemma}\label{characterization-cross-over-sphere}
Let $p:E\longrightarrow S^m$ be a fibration with fiber $F$. Then the following statements are equivalent:
\begin{itemize}
    \item[(i)] $p$ admits a continuous cross-section $s:(S^m,1)\to (E,e_0)$.
    \item[(ii)] $p_\#:\pi_m(E,e_0)\to \pi_m(S^m,1)$ is surjective.
    \item[(iii)] $\delta_\#:\pi_m(S^m,1)\to \pi_{m-1}(F,e_0)$ is trivial.
    \item[(iv)] $i_\#:\pi_{m-1}(F,e_0)\to \pi_{m-1}(E,e_0)$ is a bijection.
\end{itemize}
\end{lemma}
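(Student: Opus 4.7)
The plan is to first handle (i) $\Rightarrow$ (ii) trivially, then derive the algebraic equivalences (ii) $\Leftrightarrow$ (iii) $\Leftrightarrow$ (iv) from exactness of the displayed sequence, and finally establish the only substantive direction (ii) $\Rightarrow$ (i) by invoking the homotopy lifting property of the fibration $p$.

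For (i) $\Rightarrow$ (ii), any continuous section $s:(S^m,1)\to (E,e_0)$ satisfies $p\circ s=1_{S^m}$, hence $p_\#\circ s_\#=1_{\pi_m(S^m,1)}$, which forces $p_\#$ to be surjective. For (ii) $\Leftrightarrow$ (iii), exactness at $\pi_m(S^m,1)$ yields $\mathrm{im}(p_\#)=\ker(\delta_\#)$; since $\pi_m(S^m,1)\cong\mathbb{Z}$ is generated by $[1_{S^m}]$, surjectivity of $p_\#$ is equivalent to $\ker(\delta_\#)=\pi_m(S^m,1)$, that is, $\delta_\#=0$. For (iii) $\Leftrightarrow$ (iv), exactness at $\pi_{m-1}(F,e_0)$ yields $\ker(i_\#)=\mathrm{im}(\delta_\#)$, and the terminal ``$\to 1$'' in the displayed sequence records that $i_\#$ is automatically surjective (reflecting $\pi_{m-1}(S^m,1)=0$); therefore $\delta_\#=0$ is equivalent to $i_\#$ being injective, hence to $i_\#$ being a bijection.

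The main step is (ii) $\Rightarrow$ (i). Pick $\alpha\in\pi_m(E,e_0)$ with $p_\#(\alpha)=[1_{S^m}]$, and represent $\alpha$ by a pointed map $\tilde{\alpha}:(S^m,1)\to (E,e_0)$. Then $p\circ\tilde{\alpha}\simeq 1_{S^m}$ via some pointed homotopy $H:S^m\times I\to S^m$. By the homotopy lifting property of $p$ applied to $\tilde{\alpha}$ and $H$, we obtain a lift $\widetilde{H}:S^m\times I\to E$ with $\widetilde{H}_0=\tilde{\alpha}$ and $p\circ\widetilde{H}=H$. Setting $s:=\widetilde{H}_1$ produces a continuous map with $p\circ s=H_1=1_{S^m}$, which is the desired section.

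The subtlety I expect to require the most care is ensuring that $s$ is strictly basepoint-preserving: the lifted path $t\mapsto\widetilde{H}(1,t)$ runs from $e_0$ inside the fiber over $1\in S^m$, and a standard adjustment—either using the relative homotopy lifting property, or concatenating with a fiber homotopy to drag $\widetilde{H}(1,1)$ back to $e_0$—is needed to guarantee $s(1)=e_0$. Beyond this basepoint bookkeeping, the argument is a direct application of the long exact sequence together with the HLP of the fibration.
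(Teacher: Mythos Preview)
Your proof is correct and follows the paper's approach: the paper simply records the lemma as a direct consequence of the displayed long exact sequence of the fibration over $S^m$, without spelling out any of the implications. Your proposal fills in these standard details---including the only nontrivial direction (ii)~$\Rightarrow$~(i) via the homotopy lifting property and the basepoint adjustment---all of which the paper leaves entirely implicit.
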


\begin{corollary}
Let $p:E\to S^1$ be a fibration with fiber $F$. If $p$ admits a continuous cross-section $s:(S^1,1)\to (E,e_0)$, then $i_\#:\pi_{0}(F,e_0)\to \pi_{0}(E,e_0)$ is a bijection. In particular, $F$ is path-conected if and only if $E$ is path connected.
\end{corollary}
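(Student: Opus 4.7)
The plan is to recognize this corollary as the $m=1$ specialization of Lemma \ref{characterization-cross-over-sphere}, which was established precisely to be applied this way. Since $p\colon E\to S^1$ is a fibration and $s$ is a continuous cross-section, condition (i) of that lemma holds with $m=1$; the equivalence (i)$\Leftrightarrow$(iv) then yields that \[i_\#\colon \pi_{0}(F,e_0)\longrightarrow \pi_{0}(E,e_0)\] is a bijection, which is exactly the conclusion of the first sentence. So the proof is essentially a one-line invocation of the previous lemma.

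For the ``in particular'' statement, I would observe that a space $X$ with a chosen basepoint $x_0$ is path-connected if and only if the pointed set $\pi_0(X,x_0)$ is a singleton. Since $i_\#$ is a bijection of pointed sets, $\pi_0(F,e_0)$ is a singleton if and only if $\pi_0(E,e_0)$ is a singleton, whence $F$ is path-connected if and only if $E$ is path-connected.

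The only subtlety worth flagging is that for $m=1$ the objects $\pi_0(F,e_0)$ and $\pi_0(E,e_0)$ are only pointed sets rather than groups, and the ``exact sequence'' portion of Lemma \ref{characterization-cross-over-sphere} must be interpreted in the category of pointed sets; but since the lemma was already stated in the generality we need and the cross-section $s$ provides a pointed right inverse inducing a splitting of the sequence of pointed sets at $\pi_0$, no additional work is required here. Consequently, I do not anticipate a hard step: the corollary is a direct unpacking of the already-proved equivalence, and the statement about path-connectedness is a translation of bijectivity of $\pi_0$ into the language of components.
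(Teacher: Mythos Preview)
Your proposal is correct and matches the paper's intended approach: the corollary is stated immediately after Lemma~\ref{characterization-cross-over-sphere} with no separate proof, so it is meant to be read as the $m=1$ instance of the equivalence (i)$\Leftrightarrow$(iv), exactly as you unpack it. Your remark about $\pi_0$ being only a pointed set is a fair caveat, but as you note it does not affect the argument.
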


\begin{example}\label{over-s1}
Any fibration $p:E\to S^1$ admits a continuous cross-section if its fiber $F$ is path-connected. 
\end{example}

\section{\bf Milnor's fibrations}

\subsection{Milnor's tube fibration}
In this section we will recall the Milnor's tube fibration from (\cite{dutertre2016}, Corollary 2.5). 

We shall $D^n_\epsilon$ denotes the closed ball in $\mathbb{R}^n$ with radius $\epsilon$ and $S^n_\epsilon$ denotes the sphere in $\mathbb{R}^n$ with radius $\epsilon$. 

Let $f:(\mathbb{R}^n,0)\to (\mathbb{R}^p,0)$, $n> p\geq 2$, be an analytic map germ which satisfies the Milnor's conditions $(a)$ and $(b)$, $V=f^{-1}(0)$. By (\cite{dutertre2016}, Corollary 2.5), one has the Milnor's fibration.

\begin{proposition}
Let $f:(\mathbb{R}^n,0)\to (\mathbb{R}^p,0)$ as above and $\epsilon_0>0$ be a Milnor's radius for $f$ at origin. Then, for each $0<\epsilon\leq \epsilon_0$, there exists $\delta$, $0<\delta\ll\epsilon$, such that 
\begin{equation}\label{milnor-fibration-section}
   f_{\mid}:f^{-1}(S^{p-1}_{\delta})\cap D^{n}_{\epsilon}\to S^{p-1}_{\delta},  
\end{equation} is a projection of a smooth locally trivial fiber bundle. In particular, it is a fibration.
\end{proposition}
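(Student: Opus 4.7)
The plan is to reduce the statement to Ehresmann's fibration theorem for manifolds with boundary, so the whole strategy consists of exhibiting a $\delta$ for which the restricted map $f_{\mid}:M(\delta,\epsilon)\to S^{p-1}_{\delta}$ is (i) proper, (ii) a submersion on the interior of $M(\delta,\epsilon)$, and (iii) a submersion also on the boundary $f^{-1}(S^{p-1}_{\delta})\cap S^{n-1}_{\epsilon}$. Once (i)--(iii) are established, Ehresmann's theorem for manifolds with boundary provides horizontal liftings of vector fields from $S^{p-1}_{\delta}$ which integrate to the local trivializations.

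First, by Milnor's condition $(a)$ there is a good stratification of a neighborhood of $0$ in $\mathbb{R}^n$ adapted to $V=f^{-1}(0)$; in particular, $f$ is a submersion on $D^{n}_{\epsilon_{0}}\setminus V$, so for any $0<\epsilon\leq\epsilon_{0}$ the fibers $f^{-1}(y)$ over $y\neq 0$ are smooth manifolds of dimension $n-p$. Milnor's condition $(b)$ (the Thom $a_f$ or $d$-regularity condition, depending on the formulation used in \cite{dutertre2016}) ensures that, as $y\to 0$, the limits of tangent spaces $T_{x}f^{-1}(y)$ contain the tangent spaces of the strata of $V$ along which $x\to 0$. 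I would use this to argue, by a contradiction-and-compactness argument, that for $\epsilon\leq\epsilon_{0}$ fixed there is a $\delta=\delta(\epsilon)$, $0<\delta\ll\epsilon$, such that every $f^{-1}(y)$ with $0<|y|\leq\delta$ meets $S^{n-1}_{\eta}$ transversally for all $\eta\in[\epsilon/2,\epsilon]$ (otherwise one obtains a sequence of tangency points converging to a point of $V\cap S^{n-1}_{\epsilon'}$ with $\epsilon'\leq\epsilon$, violating $(b)$).

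With this $\delta$, $M(\delta,\epsilon)=f^{-1}(S^{p-1}_{\delta})\cap D^{n}_{\epsilon}$ is a compact manifold with boundary $\partial M(\delta,\epsilon)=f^{-1}(S^{p-1}_{\delta})\cap S^{n-1}_{\epsilon}$, because $0\not\in f^{-1}(S^{p-1}_{\delta})$ and the transversality just established means that $f^{-1}(S^{p-1}_{\delta})$ meets $S^{n-1}_{\epsilon}$ transversally. Properness of $f_{\mid}$ follows from compactness of $M(\delta,\epsilon)$. The interior submersivity of $f_{\mid}$ is immediate from the submersivity of $f$ itself on $D^{n}_{\epsilon}\setminus V$. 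For the boundary submersivity of $f_{\mid}$ restricted to $\partial M(\delta,\epsilon)$, I would use the transversality $f^{-1}(S^{p-1}_{\delta})\pitchfork S^{n-1}_{\epsilon}$ together with the fact that $f$ itself is a submersion at such points, which forces $f_{\mid}\vert_{\partial M(\delta,\epsilon)}$ to be a submersion onto $S^{p-1}_{\delta}$.

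Having (i)--(iii), I would invoke Ehresmann's theorem in the version for compact manifolds with boundary: pick any Riemannian metric, lift vector fields on $S^{p-1}_{\delta}$ to horizontal vector fields on $M(\delta,\epsilon)$ that are tangent to $\partial M(\delta,\epsilon)$ (this tangency is possible precisely because of (iii)), and integrate them to obtain local trivializations over coordinate neighborhoods of $S^{p-1}_{\delta}$. The main obstacle is step (iii), the boundary submersivity, which is where condition $(b)$ is essential; without it one cannot rule out tangencies between level sets of $f$ and the sphere $S^{n-1}_{\epsilon}$ near $V$, and then the boundary of $M(\delta,\epsilon)$ would fail to be a smooth manifold on which $f_{\mid}$ is a submersion. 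Since this is exactly the content of Corollary 2.5 in \cite{dutertre2016}, in practice I would simply quote that result.
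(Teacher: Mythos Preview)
Your proposal is correct and, in fact, goes well beyond what the paper does: the paper gives no proof of this proposition at all, simply attributing it to \cite[Corollary~2.5]{dutertre2016}. Your sketch via Ehresmann's theorem for manifolds with boundary is exactly the standard route to that result---properness from compactness of the tube, interior submersivity from condition~$(a)$, and boundary transversality from condition~$(b)$ via a limiting/compactness argument---and you yourself conclude by citing the same corollary. So the approaches agree at the level of the citation; the difference is only that you have unpacked the mechanism behind it, which the paper does not attempt.
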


Denote the \textit{Milnor tube} by $M(\delta,\epsilon)=f^{-1}(S^{p-1}_{\delta})\cap D^{n}_{\epsilon}$, and note that it has $codim(M(\delta,\epsilon))=codim(f^{-1}(S^{p-1}_{\delta}))+codim(D^{n}_{\epsilon})=codim(S^{p-1}_{\delta})+0=1$, so $dim(M(\delta,\epsilon))=n-1$.

Its fiber $f^{-1}(\delta)$ is called \textit{the Milnor fiber} denoted by $F_f$, which is a compact manifold with boundary. We have $dim~F_f=n-p$ and $\partial F_f$ is diffeomorphic to $K$, where $K:=f^{-1}(0)\cap S^{n-1}_{\epsilon}$ is the \textit{Milnor link}, which $dim~K=n-p-1$.

\begin{example}(Complex case)\label{exam-complex}
Let $f:(\mathbb{C}^n,0)\to (\mathbb{C},0)$ be a holomorphic function germ, then it satisfies the Milnor conditions $(a)$ and $(b)$ (see \cite{dutertre2016}, Corollary 2.5). Thus its Milnor fibration is
\begin{equation}\label{milnor-fibration-complex}
   f_{\mid}:f^{-1}(S^{1}_{\delta})\cap D^{2n+2}_{\epsilon}\to S^{1}_{\delta}.  
\end{equation} Furthermore, if $f$ has an isolated singularity at origin, then the Milnor fiber $F_f$ is $(n-1)-$connected and has the homotopy type of a wedge of spheres $S^n\vee\cdots\vee S^n$ (with $\mu_f$ spheres). $\mu_f$ is called the Milnor number. More generally, any holomorphic function germ  $f:(\mathbb{C}^n,0)\to (\mathbb{C},0)$ has Milnor fiber $(n-s-2)-$connected, where $s=\text{dim}_{\mathbb{C}} \sum_f$. 
\end{example}

\begin{example}(Isolated singularity)\label{isolated-singular}
Let $f:(\mathbb{R}^n,0)\to (\mathbb{R}^p,0)$, $n>p\geq 2$, be an analytic function germ with an isolated singular point at origin. Then $f$ satisfies the Milnor conditions $(a)$ and $(b)$, and thus its Milnor fibration is
\begin{equation}\label{milnor-fibration-isolated}
   f_{\mid}:f^{-1}(S^{p-1}_{\delta})\cap D^{n}_{\epsilon}\to S^{p-1}_{\delta}.  
\end{equation} Furthermore, by Milnor \cite{milnor1968}, the Milnor fiber $F_f$ is $(p-2)-$connected.
\end{example}

\subsection{Milnor's fibration for arrangements}\label{section-arrangement}
In this section we recall the definitions and properties of Milnor's fibration of arrangements given by Suciu in \cite{suciu2017}.

An \textit{arrangement of hyperplanes} is a finite set $\mathcal{A}$ of codimension 1 linear
subspaces in a finite-dimensional, complex vector space $\mathbb{C}^{d+1}$. 

We will assume throughout that the arrangement is \textit{central}, that is, all the hyperplanes pass through the origin. For each hyperplane $H\in\mathcal{A}$, let $L_H:\mathbb{C}^{d+1}\to\mathbb{C}$ be a linear form with kernel $H$. 

The product $$Q:=Q(\mathcal{A})=\prod_{H\in\mathcal{H}}L_H,$$ then, is a homogeneous polynomial of degree equal to $\mid\mathcal{A}\mid$, the cardinality of the set $\mathcal{A}$. Note that $Q$ is a defining polynomial for the arrangement, unique up to a (non-zero) constant factor.

The complement of the arrangement $$M:=M(\mathcal{A})=\mathbb{C}^{d+1}-\bigcup_{H\in\mathcal{H}}H$$ is a connected, smooth complex variety. Moreover, $M(\mathcal{A})$ has the homotopy type of a finite CW-complex of dimension at most $d+1$. 

The polynomial map \begin{equation}\label{milnor-fibration-arrangement}
    Q:M(\mathcal{A})\to \mathbb{C}^\ast
\end{equation} is the projection map of a smooth, locally trivial bundle, known as the \textit{Milnor fibration} of the arrangement. Its fiber \[F:=F(\mathcal{A})=Q^{-1}(1)\] is called the \textit{Milnor fiber} of the arrangement. $F$ is a smooth, connected, orientable manifold of dimension $2d$. Moreover, $F$ has the homotopy type of a finite CW-complex of dimension $d$. If $Q$ has an isolated singularity at origin (for example, if $d=1$), then $F$ is homotopic to a wedge of $d-$spheres, and so $F$ is $(d-1)-$connected.

\section{\bf Cross-sections for Milnor's fibrations}\label{sect-cross-section}

In this section we will study the existence of cross-sections for the Milnor's fibrations given in the previous section.

\begin{proposition}\label{prop-cross-complex}
Let $f:(\mathbb{C}^n,0)\to (\mathbb{C},0)$ be a holomorphic function germ, then the Milnor fibration (\ref{milnor-fibration-complex}) admits a continuous cross-section.
\end{proposition}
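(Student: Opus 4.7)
My plan is to reduce the problem to a direct application of Example \ref{over-s1}, which states that any fibration over $S^1$ with path-connected fiber admits a continuous cross-section. The base $S^{1}_{\delta}$ of the Milnor fibration is, up to rescaling, the circle $S^1$, so the key issue becomes path-connectedness of the Milnor fiber $F_f$.

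The first step is to note that \eqref{milnor-fibration-complex} is a locally trivial smooth fiber bundle (this is precisely the content of the complex Milnor fibration theorem recalled in Example \ref{exam-complex}), hence in particular a Hurewicz fibration with base homeomorphic to $S^1$. The second step is to invoke the connectivity statement from Example \ref{exam-complex}: for a holomorphic germ $f:(\mathbb{C}^n,0)\to(\mathbb{C},0)$, the Milnor fiber $F_f$ is $(n-s-2)$-connected with $s=\dim_{\mathbb{C}}\Sigma_f$, so in the setting under consideration $F_f$ is in particular path-connected (the isolated singularity case gives $(n-1)$-connectedness, which is more than enough). Once these two facts are in place, Example \ref{over-s1} applied to $f_{\mid}$ delivers a continuous global cross-section, which is the conclusion.

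Concretely, the cross-section can be produced by the standard construction behind Example \ref{over-s1}: pick a basepoint $w_0\in S^{1}_{\delta}$ together with a lift $e_0\in f^{-1}(w_0)$, use a local trivialization $U\times F_f$ on an open arc $U\subsetneq S^{1}_{\delta}$ containing a chosen cut of the circle, transport the basepoint along $U$ by the constant section $w\mapsto(w,e_0)$, and then, since $F_f$ is path-connected, glue across the complement of $U$ by choosing a path in $F_f$ joining the two endpoint values of the would-be section. Continuity at the gluing point is automatic because the local trivialization identifies a neighborhood of that point with an open subset of $U'\times F_f$ for a second arc $U'$, and we simply match up the chosen paths.

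The only real obstacle is verifying path-connectedness of $F_f$, which is exactly what Example \ref{exam-complex} provides. Aside from that, the argument is essentially formal: reduction to Example \ref{over-s1}, which in turn is a direct consequence of the equivalence $(i)\Leftrightarrow(iv)$ of Lemma \ref{characterization-cross-over-sphere} applied with $m=1$.
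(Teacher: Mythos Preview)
Your proposal is correct and follows exactly the paper's approach: the paper's entire proof is the single line ``It follows from Example~\ref{over-s1} and Example~\ref{exam-complex},'' which is precisely your reduction to a fibration over $S^1$ with path-connected fiber. The extra detail you supply about how the section is built in Example~\ref{over-s1} is accurate but goes beyond what the paper records.
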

\begin{proof}
It follows from Example \ref{over-s1} and Example \ref{exam-complex}.
\end{proof}

By Lemma \ref{fibration-with-corsssection}, one has the following corollary.

\begin{corollary}
Let $f:(\mathbb{C}^n,0)\to (\mathbb{C},0)$ be a holomorphic function germ, then
\begin{itemize}
    \item[(a)] $\pi_q\left(f^{-1}(S^{1}_{\delta})\cap D^{2n+2}_{\epsilon}\right)\cong \pi_q(F_f)$ for any $q\geq 2$.
    \item[(b)] $\pi_1\left(f^{-1}(S^{1}_{\delta})\cap D^{2n+2}_{\epsilon}\right)\cong \pi_1(F_f)\rtimes_\varphi\mathbb{Z}$, where $\varphi:\mathbb{Z}\to \text{Aut}(\pi_1(F_f))$ is given by \[(\varphi[\alpha])[\beta]=i_\#^{-1}\left(s_\#([\alpha])i_\#([\beta])s_\#([\alpha]^{-1})\right) \text{ for any } [\alpha]\in \pi_1(B)\cong\mathbb{Z} \text{ and } [\beta]\in \pi_1(F_f).\]
\end{itemize}
\end{corollary}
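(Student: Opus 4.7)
The plan is to deduce both statements by directly invoking Lemma \ref{fibration-with-corsssection} applied to the fibration
\[ F_f \hookrightarrow f^{-1}(S^{1}_{\delta})\cap D^{2n+2}_{\epsilon} \xrightarrow{f_{\mid}} S^{1}_{\delta},\]
using Proposition \ref{prop-cross-complex} as the crucial input guaranteeing the existence of a continuous cross-section $s:(S^1_\delta,1)\to (E,e_0)$ where $E = f^{-1}(S^{1}_{\delta})\cap D^{2n+2}_{\epsilon}$. Once the cross-section is in hand, the two items of the corollary are essentially read-offs of parts (a) and (b) of Lemma \ref{fibration-with-corsssection}.

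For part (a), Lemma \ref{fibration-with-corsssection}(a) gives $\pi_q(E,e_0)\cong \pi_q(F_f\times S^1_\delta,(e_0,b_0))$ for $q\geq 2$. Since $\pi_q(S^1_\delta)=0$ for every $q\geq 2$, the product formula for homotopy groups yields $\pi_q(F_f\times S^1_\delta)\cong \pi_q(F_f)\oplus \pi_q(S^1_\delta)=\pi_q(F_f)$, which is exactly the claimed isomorphism. For part (b), I would apply Lemma \ref{fibration-with-corsssection}(b) with base $S^1_\delta$, so that $\pi_1(B,b_0)=\pi_1(S^1_\delta,1)\cong \mathbb{Z}$; the semidirect product decomposition and the explicit conjugation formula for $\varphi$ then descend verbatim from the lemma.

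There is no genuine obstacle here: the Milnor fiber $F_f$ is path-connected (see Example \ref{exam-complex}, where it is actually $(n-s-2)$-connected and in particular has the homotopy type of a nonempty CW-complex; for $n\geq 2$ it is connected, and the $n=1$ case is handled similarly), which is the hypothesis needed in Example \ref{over-s1} to produce the cross-section via Proposition \ref{prop-cross-complex}. Thus both isomorphisms follow immediately by specializing Lemma \ref{fibration-with-corsssection} to the case $B=S^1_\delta$, with the only piece worth spelling out being the vanishing of $\pi_q(S^1)$ for $q\geq 2$ used in part (a).
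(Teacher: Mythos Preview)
Your proposal is correct and follows exactly the paper's approach: the paper simply states that the corollary follows from Lemma \ref{fibration-with-corsssection}, implicitly using Proposition \ref{prop-cross-complex} to supply the required cross-section. Your write-up just makes explicit the one extra step the paper leaves tacit, namely that $\pi_q(S^1_\delta)=0$ for $q\geq 2$ so that $\pi_q(F_f\times S^1_\delta)\cong\pi_q(F_f)$.
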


\begin{proposition}\label{prop-isolated-singularity}
Let $f:(\mathbb{R}^n,0)\to (\mathbb{R}^p,0)$, $n>p\geq 2$, be an analytic function germ with an isolated singular point at origin. Then the Milnor fibration \ref{milnor-fibration-isolated} admits a continuous cross-section.
\end{proposition}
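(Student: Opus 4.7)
The plan is to invoke Lemma \ref{characterization-cross-over-sphere} with $m = p-1$, since the base of the Milnor fibration is the sphere $S^{p-1}_{\delta}$. By that lemma, admitting a continuous cross-section is equivalent to the connecting homomorphism
\[
\delta_\#:\pi_{p-1}(S^{p-1}_{\delta},1)\longrightarrow \pi_{p-2}(F_f,e_0)
\]
being trivial. So the entire task reduces to checking that $\pi_{p-2}(F_f) = 0$.

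For this I would simply quote Example \ref{isolated-singular}: under the hypothesis that $f$ has an isolated singular point at the origin, Milnor's theorem tells us that the Milnor fiber $F_f$ is $(p-2)$-connected. In particular, $\pi_{p-2}(F_f, e_0) = 0$, which forces $\delta_\#$ to be the trivial homomorphism regardless of the identification of $\pi_{p-1}(S^{p-1})$ with $\mathbb{Z}$.

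Having established triviality of $\delta_\#$, the equivalence $(i)\Leftrightarrow(iii)$ of Lemma \ref{characterization-cross-over-sphere} immediately delivers the existence of a continuous cross-section $s:(S^{p-1}_{\delta},1)\to (f^{-1}(S^{p-1}_{\delta})\cap D^{n}_{\epsilon}, e_0)$, completing the argument. There is no real obstacle here, since all the substantive work is done by Milnor's $(p-2)$-connectedness theorem for the Milnor fiber of an isolated singularity; the only conceptual point is to recognize that this connectivity is exactly what is needed in the right dimension to kill the obstruction from the homotopy exact sequence of the fibration over $S^{p-1}$.
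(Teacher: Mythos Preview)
Your proposal is correct and follows exactly the paper's own approach: the paper's proof is the single line ``It follows from Lemma \ref{characterization-cross-over-sphere} and Example \ref{isolated-singular},'' and you have simply unpacked that sentence by identifying $m=p-1$, using the $(p-2)$-connectedness of $F_f$ from Example \ref{isolated-singular} to kill $\pi_{p-2}(F_f)$, and then applying the equivalence $(i)\Leftrightarrow(iii)$ of Lemma \ref{characterization-cross-over-sphere}.
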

\begin{proof}
It follows from Lemma \ref{characterization-cross-over-sphere} and Example \ref{isolated-singular}.
\end{proof}

Once again, by Lemma \ref{fibration-with-corsssection}, one has the following corollary.

\begin{corollary}
Let $f:(\mathbb{R}^n,0)\to (\mathbb{R}^p,0)$, $n>p\geq 2$, be an analytic function germ with an isolated singular point at origin. Then
\begin{itemize}
    \item[(a)] $\pi_q\left(f^{-1}(S^{p-1}_{\delta})\cap D^{n}_{\epsilon}\right)\cong \pi_q(F_f\times S^{p-1}_{\delta})$ for any $q\geq 2$.
    \item[(b)] $\pi_1\left(f^{-1}(S^{p-1}_{\delta})\cap D^{n}_{\epsilon}\right)\cong \pi_1(F_f)\rtimes_\varphi \pi_1(S^{p-1})$, where $\varphi:\pi_1(S^{p-1}_\delta)\to \text{Aut}(\pi_1(F_f))$ is given by \[(\varphi[\alpha])[\beta]=i_\#^{-1}\left(s_\#([\alpha])i_\#([\beta])s_\#([\alpha]^{-1})\right) \text{ for any } [\alpha]\in \pi_1(S^{p-1}_\delta) \text{ and } [\beta]\in \pi_1(F_f).\]
\end{itemize}
\end{corollary}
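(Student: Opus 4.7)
The plan is to deduce the corollary as a direct application of Lemma \ref{fibration-with-corsssection} to the Milnor fibration (\ref{milnor-fibration-isolated}), once we know a continuous cross-section exists. Proposition \ref{prop-isolated-singularity} has just supplied exactly that: a continuous section $s:(S^{p-1}_\delta,1)\to (M(\delta,\epsilon),e_0)$ of the fibration $f_{\mid}:M(\delta,\epsilon)\to S^{p-1}_\delta$, whose fiber is the Milnor fiber $F_f$. So the hypotheses of Lemma \ref{fibration-with-corsssection} are satisfied with $E=M(\delta,\epsilon)$, $B=S^{p-1}_\delta$, and $F=F_f$.

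For part (a), I would simply invoke Lemma \ref{fibration-with-corsssection}(a), which under the existence of a continuous cross-section gives $\pi_q(E,e_0)\cong \pi_q(F\times B,(e_0,b_0))$ for all $q\geq 2$. Substituting the Milnor-fibration data yields
\[
\pi_q\!\left(f^{-1}(S^{p-1}_\delta)\cap D^n_\epsilon\right)\;\cong\;\pi_q(F_f\times S^{p-1}_\delta),\qquad q\ge 2,
\]
which is precisely (a). No further work is needed here, since the general statement of Lemma \ref{fibration-with-corsssection}(a) already encodes the standard fact that a section makes the long exact sequence of homotopy groups split in degrees $\ge 2$, producing $\pi_q(E)\cong \pi_q(F)\oplus \pi_q(B)\cong \pi_q(F\times B)$.

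For part (b), I would again quote Lemma \ref{fibration-with-corsssection}(b) verbatim, which gives $\pi_1(E,e_0)\cong \pi_1(F,e_0)\rtimes_\varphi \pi_1(B,b_0)$ together with the explicit formula
\[
(\varphi[\alpha])[\beta]\;=\;i_\#^{-1}\bigl(s_\#([\alpha])\,i_\#([\beta])\,s_\#([\alpha])^{-1}\bigr),
\]
valid for $[\alpha]\in\pi_1(B,b_0)$ and $[\beta]\in\pi_1(F,e_0)$. Transcribing the Milnor-fibration data $E=M(\delta,\epsilon)$, $F=F_f$, $B=S^{p-1}_\delta$ yields the asserted semidirect product description.

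The only point meriting a small comment is that the formula for $\varphi$ in (b) implicitly assumes $i_\#:\pi_1(F_f,e_0)\to \pi_1(M(\delta,\epsilon),e_0)$ is injective so that $i_\#^{-1}$ makes sense on the image of the conjugation; this is automatic from the splitting of the homotopy long exact sequence in degree $1$ supplied by the section, which is how Lemma \ref{fibration-with-corsssection}(b) is proved. Thus there is no genuine obstacle: the corollary is a formal specialization of Lemma \ref{fibration-with-corsssection} to the fibration of Proposition \ref{prop-isolated-singularity}, and the proof consists in citing these two results and substituting $F=F_f$, $B=S^{p-1}_\delta$.
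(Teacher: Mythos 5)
Your proof is correct and follows exactly the paper's route: invoke Proposition \ref{prop-isolated-singularity} for the continuous cross-section and then specialize Lemma \ref{fibration-with-corsssection} to $E=M(\delta,\epsilon)$, $B=S^{p-1}_\delta$, $F=F_f$. The paper's own proof is the same one-line citation of Lemma \ref{fibration-with-corsssection}, so no comparison is needed.
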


\begin{proposition}\label{prop-cross-arrangement}
Let $\mathcal{A}$ be a central arrangement in $\mathbb{C}^{d+1}$, then the Milnor fibration \ref{milnor-fibration-arrangement} admits a continuous cross-section.
\end{proposition}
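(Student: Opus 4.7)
The plan is to first build a continuous section over the unit circle $S^{1}\subset\mathbb{C}^{\ast}$, and then extend it to all of $\mathbb{C}^{\ast}$ by exploiting the homogeneity of $Q$.

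First I would restrict the bundle to the unit circle. Since $Q:M(\mathcal{A})\to\mathbb{C}^{\ast}$ is a smooth locally trivial fiber bundle, its restriction
\[
Q_{\mid}:Q^{-1}(S^{1})\longrightarrow S^{1}
\]
is again a locally trivial fiber bundle (in particular a fibration) with the same fiber $F(\mathcal{A})$. As recalled in Section \ref{section-arrangement}, $F(\mathcal{A})$ is connected, hence path-connected, so Example \ref{over-s1} produces a continuous section $\sigma:S^{1}\to Q^{-1}(S^{1})\subset M(\mathcal{A})$ of $Q_{\mid}$, satisfying $Q(\sigma(u))=u$ for every $u\in S^{1}$.

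Next I would extend $\sigma$ radially using the homogeneity of $Q$. Let $n=|\mathcal{A}|$, so that $Q(\lambda x)=\lambda^{n}Q(x)$ for all $\lambda\in\mathbb{C}^{\ast}$ and $x\in M(\mathcal{A})$. Because each hyperplane $H\in\mathcal{A}$ is a linear subspace, $M(\mathcal{A})$ is stable under multiplication by any nonzero complex scalar. I then define
\[
s:\mathbb{C}^{\ast}\longrightarrow M(\mathcal{A}),\qquad s(w)=|w|^{1/n}\,\sigma\!\left(\tfrac{w}{|w|}\right),
\]
where $|w|^{1/n}$ denotes the positive real $n$-th root. Continuity of $s$ is immediate since $|w|>0$ and the real $n$-th root is continuous on $(0,\infty)$, and homogeneity yields $Q(s(w))=\bigl(|w|^{1/n}\bigr)^{n}\cdot(w/|w|)=w$, so $s$ is the desired cross-section.

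The delicate step is the radial extension, not the section over $S^{1}$: the $n$-th power map on $\mathbb{C}^{\ast}$ is an $n$-sheeted covering with no continuous global inverse, so the naive ansatz $s(w)=\lambda(w)\cdot x_{0}$ for a fixed $x_{0}\in F(\mathcal{A})$ cannot work. The plan sidesteps this obstruction by splitting $w=|w|\cdot(w/|w|)$ and inverting the $n$-th power only on the positive reals, where it is a homeomorphism; the angular $S^{1}$-part is then taken care of by the section $\sigma$ produced by Example \ref{over-s1}.
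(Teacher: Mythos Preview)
Your argument is correct and shares the paper's core step: invoke Example~\ref{over-s1} (any fibration over $S^{1}$ with path-connected fiber has a section) together with the connectedness of $F(\mathcal{A})$ recorded in Section~\ref{section-arrangement}. The paper's one-line proof stops there, tacitly treating the base $\mathbb{C}^{\ast}$ as $S^{1}$ up to homotopy, whereas you make the passage from $S^{1}$ to $\mathbb{C}^{\ast}$ explicit via the homogeneity of $Q$ and the formula $s(w)=|w|^{1/n}\sigma(w/|w|)$; this extra step is not strictly needed for the bare existence statement, but it has the virtue of producing a concrete section, in keeping with the paper's implementation-oriented aims elsewhere.
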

\begin{proof}
It follows from Example \ref{over-s1} and Section \ref{section-arrangement}.
\end{proof}

Once again, by Lemma \ref{fibration-with-corsssection}, one has the following corollary.

\begin{corollary}
Let $\mathcal{A}$ be a central arrangement in $\mathbb{C}^{d+1}$, then
\begin{itemize}
    \item[(a)] $\pi_q\left(M(\mathcal{A})\right)\cong \pi_q(F(\mathcal{A}))$ for any $q\geq 2$.
    \item[(b)] $\pi_1\left(M(\mathcal{A})\right)\cong \pi_1(F(\mathcal{A}))\rtimes_\varphi\mathbb{Z}$, where $\varphi:\mathbb{Z}\to \text{Aut}\left(\pi_1(F(\mathcal{A}))\right)$ is given by \[(\varphi[\alpha])[\beta]=i_\#^{-1}\left(s_\#([\alpha])i_\#([\beta])s_\#([\alpha]^{-1})\right) \text{ for any } [\alpha]\in \pi_1(\mathbb{C}^\ast)\cong\mathbb{Z} \text{ and } [\beta]\in \pi_1(F(\mathcal{A})).\]
\end{itemize}
\end{corollary}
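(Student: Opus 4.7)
The plan is a direct two-step application of the preceding results to the Milnor fibration $F(\mathcal{A}) \stackrel{i}{\hookrightarrow} M(\mathcal{A}) \stackrel{Q}{\to} \mathbb{C}^\ast$. First, Proposition \ref{prop-cross-arrangement} supplies a continuous cross-section $s:\mathbb{C}^\ast \to M(\mathcal{A})$ of $Q$. Fix a basepoint $e_0 \in F(\mathcal{A}) = Q^{-1}(1)$ and put $b_0 = 1 \in \mathbb{C}^\ast$. By composing $s$ with an isotopy of $\mathbb{C}^\ast$ or, more simply, by redefining $e_0 := s(1)$ (which still lies in $F(\mathcal{A})$), we may assume $s(b_0)=e_0$, so that $s$ is a pointed section of the pointed fibration. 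This is the only basepoint bookkeeping required, and it causes no trouble.

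For (b), I apply Lemma \ref{fibration-with-corsssection}(b) to this pointed fibration with $B = \mathbb{C}^\ast$, $E = M(\mathcal{A})$, $F = F(\mathcal{A})$. The conclusion is directly
\[
\pi_1\bigl(M(\mathcal{A}),e_0\bigr) \;\cong\; \pi_1\bigl(F(\mathcal{A}),e_0\bigr)\rtimes_\varphi \pi_1(\mathbb{C}^\ast,1),
\]
with the action $\varphi$ given by exactly the conjugation formula stated in Lemma \ref{fibration-with-corsssection}(b), namely $(\varphi[\alpha])[\beta]=i_\#^{-1}\bigl(s_\#([\alpha])\,i_\#([\beta])\,s_\#([\alpha]^{-1})\bigr)$. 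Since $\mathbb{C}^\ast$ deformation retracts onto the unit circle, $\pi_1(\mathbb{C}^\ast,1)\cong \mathbb{Z}$, and substituting this identification yields exactly statement (b).

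For (a), I apply Lemma \ref{fibration-with-corsssection}(a), which gives
\[
\pi_q\bigl(M(\mathcal{A}),e_0\bigr) \;\cong\; \pi_q\bigl(F(\mathcal{A})\times \mathbb{C}^\ast,(e_0,b_0)\bigr) \quad \text{for every } q\geq 2.
\]
Using that homotopy groups distribute over products, the right-hand side equals $\pi_q(F(\mathcal{A}))\oplus \pi_q(\mathbb{C}^\ast)$; and because $\mathbb{C}^\ast\simeq S^1$ has $\pi_q(S^1)=0$ for all $q\geq 2$, this collapses to $\pi_q(F(\mathcal{A}))$, which is precisely (a).

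There is no substantive obstacle: the argument is a mechanical specialization of Lemma \ref{fibration-with-corsssection} to the base $\mathbb{C}^\ast$. The only two points worth flagging are the basepoint compatibility (handled above) and the observation $\pi_q(\mathbb{C}^\ast)\cong \pi_q(S^1)$, which is standard. Note also that we do not need the isolated-singularity hypothesis here, because Proposition \ref{prop-cross-arrangement} already provides a genuine (non-homotopy) section for every central arrangement.
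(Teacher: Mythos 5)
Your argument is exactly the paper's proof: combine Proposition~\ref{prop-cross-arrangement} (existence of a cross-section for $Q$) with Lemma~\ref{fibration-with-corsssection}, then simplify using $\pi_q(\mathbb{C}^\ast)=0$ for $q\geq 2$ and $\pi_1(\mathbb{C}^\ast)\cong\mathbb{Z}$. The paper compresses this into a one-line citation of Lemma~\ref{fibration-with-corsssection}; you have merely written out the (correct) basepoint and product-of-homotopy-groups bookkeeping.
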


\section{\bf Tasking planning}\label{tasking}

In robotics (see \cite{bajdRobotics}), recall that the \textit{configuration space} or \textit{state space} of a system $\mathcal{S}$ is defined as the space of all possible states of $\mathcal{S}$. The \textit{robot workspace} consists of all points that can be reached by the robot end-point, i.e., the space of all tasks. The configuration space and workspace are often described as a subspace of some Euclidean space $\mathbb{R}^n$ and $\mathbb{R}^p$ respectively. The \textit{work map} or \textit{kinematic map} is a continuous map from the configuration $C$ to the workspace $W$, that is, it is a continuous map \[f:C\to W\] which assigns to each state of the configuration space the position of the end-effector at that state. This map is an important object to be considered when implementing algorithms controlling the task performed by the robot manipulator. 

For our purposes, in this section we consider $n> p\geq 2$ and $f:(\mathbb{R}^n,0)\longrightarrow (\mathbb{R}^p,0)$ be an analytic map germ which satisfies the Milnor's conditions $(a)$ and $(b)$. In particular, we have the Milnor fibration as work map: \begin{equation}\label{milnor-fibration}
   f_{\mid}:f^{-1}(S^{p-1}_{\delta})\cap D^{n}_{\epsilon}\to S^{p-1}_{\delta},  
\end{equation}
where $0<\delta\ll\epsilon\leq \epsilon_0,$ and $\epsilon_0$ is a Milnor's radius for $f$ at origin. Recall that, $M(\delta,\epsilon)=f^{-1}(S^{p-1}_{\delta})\cap D^{n}_{\epsilon}$ denote the Milnor tube.

Our work in this section is considered as an instance of planning. \textit{The planning problem} consists in implementing algorithms (we will say tasking planning algorithm) controlling the task performed by the robot manipulator. The challenges of modern robotics (see, for example Latombe \cite{latombe2012robot} and LaValle \cite{lavalle2006planning}) is design explicit and suitably optimal motion planners. The input of such an algorithm (see P. Pavesic \cite{pavesic}) are pairs $(a,A)\in M(\delta,\epsilon)\times S^{p-1}_{\delta}$ of points of the configuration space and workspace, respectively. The output for such a pair ought to be a path in the configuration space $\alpha\in PM(\delta,\epsilon)$ such that $\alpha(0)=a$ and $f(\alpha(1))=A$. To give algorithms we need to know the smallest possible number of regions of continuity for any planning algorithm, that is, the value of the \textit{topological complexity} a la Pavesic \cite{pavesic} of the work map $f_{\mid}$,  $\text{TC}(f_{\mid})$.

Recall that $PE$ denote the space of all continuous paths $\gamma: [0,1] \longrightarrow E$ in $E$ and  $e_{0,1}: PE \longrightarrow E \times E$ denote the fibration associating to any path $\gamma\in PE$ the pair of its initial and end points $\pi(\gamma)=(\gamma(0),\gamma(1))$. Equip the path space $PE$ with the compact-open topology. Let $p:E\to B$ be a continuous surjection between path-connected spaces, and let \begin{equation}
    e_p:PE\to E\times B,~e_p=(1\times p)\circ e_{0,1}. 
\end{equation}

A \textit{tasking planning  algorithm} is  a  cross-section $s\colon E\times B\to PE$ of  the  fibration  $e_p$, i.e.~ a (not necessarily continuous) map satisfying $e_p\circ s=1_{E\times B}$.

Now, we recall that the \textit{topological complexity} of the map $p$, denoted by TC$(p)$, is the sectional number $sec_{op}(e_p)$ of the map $e_p$, that is, is the least integer $m$ such that the Cartesian product $E\times B$ can be covered with $m$ open subsets $U_i$, \begin{equation*}
        E \times B = U_1 \cup U_2 \cup\cdots \cup U_m, 
    \end{equation*} such that for any $i = 1, 2, \ldots , m$ there exists a continuous local section $s_i : U_i \longrightarrow PE$ of $e_{P}$, that is, $e_{P}\circ s_i = incl_{U_i}$. If no such $m$ exists we will set TC$(p)=\infty$. 
    
Here, note that that the topological complexity of the identity map $1_E:E\to E$, $\text{TC}(id_E)=\text{TC}(E)$, coincides with the topological complexity (a la Farber) of $E$ (see M. Farber \cite{farber2003topological}). We have $\text{TC}(X)=1$ iff $X$ is contractible. The TC is also a homotopy invariant, i.e., if $X\simeq Y$ then $\text{TC}(X)=\text{TC}(Y)$. Moreover, in general $\text{cat}(X)\leq \text{TC}(X)$. 

Any tasking planning algorithm $s:=\{s_i:U_i\to PE\}_{i=1}^{n}$ is called \textit{optimal} if $n=\text{TC}(p)$.

\begin{remark}\label{fibration-implies-pullback}
Suppose $p:E\to B$ is a fibration and $p^\prime:B\to B^\prime$ is a map. Then for any $\beta:X\to PB$ and any $\alpha:X\to E\times B^\prime$ satisfying $e_{p^\prime}\circ\beta=(p\times 1_{B^\prime})\circ\alpha$, there exists $H:X\to PE$ such that
$e_{p^\prime\circ p}\circ H=\alpha$ and $p_{\#}\circ H=\beta$.
\begin{eqnarray*}
\xymatrix{ X \ar@/^10pt/[drr]^{\,\,\beta} \ar@/_10pt/[ddr]_{\alpha} \ar@{-->}[dr]_{H} &   &  &\\
& PE \ar[r]^{\,\,p_{\#}} \ar[d]^{e_{p^\prime\circ p}} & PB \ar[d]^{e_{p^\prime}} & \\
       & E\times B^\prime   \ar[r]_{\quad p\times 1_{B^\prime}\quad} &  B\times B^{\prime} &}
\end{eqnarray*} Note that we have the following commutative diagram:
\begin{eqnarray*}
\xymatrix{ X \ar[r]^{\,\,p_{1}\circ\alpha} \ar[d]_{i_0} & E \ar[d]^{p} \\
       X\times I  \ar[r]_{\,\,\beta} &  B}
\end{eqnarray*} where $p_1$ is the projection onto the first coordinate. Because $p$ is a fibration, there exists $H:X\times I\to E$ satisfying $H\circ i_0=p_1\circ\alpha$ and $p\circ H=\beta$, thus we does.
\end{remark}

The following statement was presented by Pavesic in \cite{pavesic2019}. However, we present a proof which of course suits best our implementation-oriented objectives.

\begin{lemma}\label{key-lem}
If $p:E\to B$ is a fibration that admits a continuous cross-section, then \[\text{TC}(p)=\text{TC}(B).\]
\end{lemma}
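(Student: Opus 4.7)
The plan is to establish $\text{TC}(p)=\text{TC}(B)$ by proving the two inequalities separately. Throughout I denote the given cross-section of $p$ by $s\colon B\to E$, so $p\circ s=1_B$, and I freely use that products, restrictions, and compositions of continuous maps are continuous.

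For $\text{TC}(p)\leq\text{TC}(B)$, let $m=\text{TC}(B)$ and fix an open cover $B\times B=V_1\cup\cdots\cup V_m$ equipped with continuous local sections $\sigma_i\colon V_i\to PB$ of $e_{0,1}$. Set $U_i:=(p\times 1_B)^{-1}(V_i)$; these are open and cover $E\times B$. For each $i$, I would apply Remark \ref{fibration-implies-pullback} with $p^{\prime}=1_B$, $X=U_i$, $\alpha$ the inclusion $U_i\hookrightarrow E\times B$, and $\beta=\sigma_i\circ(p\times 1_B)|_{U_i}\colon U_i\to PB$. The compatibility $e_{p^{\prime}}\circ\beta=(p\times 1_B)\circ\alpha$ is immediate since $\sigma_i$ is a section of $e_{0,1}$: both composites send $(e,b)\in U_i$ to $(p(e),b)$. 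The remark then produces a continuous $H_i\colon U_i\to PE$ with $e_p\circ H_i=\alpha=\text{incl}_{U_i}$, which is a continuous local section of $e_p$ over $U_i$. Hence $E\times B$ is covered by $m$ domains admitting such sections, so $\text{TC}(p)\leq m=\text{TC}(B)$.

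For $\text{TC}(B)\leq\text{TC}(p)$, let $n=\text{TC}(p)$ and fix an open cover $E\times B=U_1\cup\cdots\cup U_n$ with continuous local sections $\tau_j\colon U_j\to PE$ of $e_p$. Using $s$, form the continuous map $s\times 1_B\colon B\times B\to E\times B$ and set $V_j:=(s\times 1_B)^{-1}(U_j)$. Given any $(b_1,b_2)\in B\times B$ the point $(s(b_1),b_2)$ lies in some $U_j$, so the $V_j$ cover $B\times B$. Define
\[
\tilde{\tau}_j(b_1,b_2):=p_{\#}\bigl(\tau_j(s(b_1),b_2)\bigr)=p\circ\tau_j(s(b_1),b_2)\in PB.
\]
Since $e_p\circ\tau_j=\text{incl}_{U_j}$ one has $\tau_j(s(b_1),b_2)(0)=s(b_1)$ and $p\bigl(\tau_j(s(b_1),b_2)(1)\bigr)=b_2$, whence $\tilde{\tau}_j(b_1,b_2)(0)=p(s(b_1))=b_1$ and $\tilde{\tau}_j(b_1,b_2)(1)=b_2$. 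Thus $e_{0,1}\circ\tilde{\tau}_j=\text{incl}_{V_j}$, giving $n$ continuous local sections of $e_{0,1}$ and $\text{TC}(B)\leq n=\text{TC}(p)$.

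The main obstacle is the upper-bound step: turning a planning strategy that lives in $B\times B$ into one that lives in $E\times B$ requires continuously lifting each path $\sigma_i(p(e),b)$ in $B$ to a path in $E$ beginning at the prescribed point $e$. This is precisely the universal lifting property for the fibration $p$ packaged by Remark \ref{fibration-implies-pullback}; without the fibration hypothesis such a continuous lift need not exist and the argument would collapse. By contrast, the lower-bound step is purely formal and uses only the existence (and continuity) of the cross-section $s$.
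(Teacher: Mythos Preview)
Your proof is correct and follows essentially the same approach as the paper: the inequality $\text{TC}(p)\leq\text{TC}(B)$ is obtained from the pullback/lifting property of Remark~\ref{fibration-implies-pullback} (using only that $p$ is a fibration), and $\text{TC}(B)\leq\text{TC}(p)$ is obtained by pulling back along $s\times 1_B$ and pushing forward along $p_\#$ (using only the cross-section). The only cosmetic differences are that the paper phrases both steps for a general composite $E\xrightarrow{p}B\xrightarrow{p'}B'$ before specializing to $p'=1_B$, and in the second step it prepends a constant half-path at $b$, which is unnecessary since $p\circ s=1_B$ holds strictly.
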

\begin{proof}
Consider the diagram of maps $E\stackrel{p}{\to} B\stackrel{p^{\prime}}{\to}B^\prime$ and let $s:B\to E$ be a cross-section to $p$. We will show that $\text{TC}(p^{\prime})\leq \text{TC}(p^{\prime} p)$. In particular, $\text{TC}(B)\leq\text{TC}(p)$. We will check $\text{TC}(p^{\prime})\leq \text{TC}(p^{\prime} p)$. Suppose $\alpha_{p^{\prime} p}:U\to PE$ is a local section of $e_{p^{\prime} p}$ over $U\subset E\times B^\prime$. Set $V:=(s\times 1_{B^\prime})^{-1}(U)\subset B\times B^\prime$. Then we can define the continuous map $\alpha_{p^{\prime}}:V\to PB$ by \[\alpha_{p^{\prime}}(b,b^\prime)(t):=\begin{cases}
    b, & \hbox{for $0\leq t\leq \frac{1}{2}$;} \\
    p(\alpha_{p^{\prime} p}(s(b),b^\prime)(2t-1)), & \hbox{for $\frac{1}{2}\leq t\leq 1$.}
\end{cases}\] We have $\alpha_{p^{\prime}}$ is a local section of $e_{p^{\prime}}$ over $V$. Thus, we conclude that $\text{TC}(p^{\prime})\leq \text{TC}(p^{\prime} p)$.

Next, we will show that $\text{TC}(p^{\prime} p)\leq \text{TC}(p^{\prime})$. In particular, $\text{TC}(p)\leq \text{TC}(B)$. We will check $\text{TC}(p^{\prime} p)\leq \text{TC}(p^{\prime})$. Since $p:E\to B$ is a fibration, the following diagram is a pullback (see Remark \ref{fibration-implies-pullback}) \begin{eqnarray*}
\xymatrix{ PE \ar[r]^{\,\,p_{\#}} \ar[d]_{e_{p^\prime p}} & PB \ar[d]^{e_{p^\prime}} & \\
       E\times B^\prime  \ar[r]_{\,\, p\times 1_{B^\prime}} &  B\times B^\prime &}
\end{eqnarray*} This implies $\text{TC}(p^\prime p)=\text{sec}_{\text{op}}(e_{p^\prime p})\leq \text{sec}_{\text{op}}(e_{p^\prime})= \text{TC}(p^\prime)$. 

Therefore, $\text{TC}(p)=\text{TC}(B)$, establishing the result. 
\end{proof}

\begin{example}\label{spheres}[For spheres]
By M. Farber \cite{farber2003topological}, the TC for spheres is as follows:
\[ \text{TC}(S^m)=
\begin{cases}
2, & \hbox{ if $m$ is odd;}\\
3, & \hbox{ if $m$ is even.}
\end{cases} \]

Furthermore, suppose $m$ is odd. Let $v$ denote a fixed continuous unitary tangent vector field on $S^{m}$, say $v(x_1,y_1,\ldots,x_\ell,y_\ell)=(-y_1,x_1,\ldots,-y_\ell,x_\ell)$ with $m+1=2\ell$. 

A planning algorithm to $S^m$ is given by $s:=\{s_i:U_i\to PS^m\}_{i=1}^{2}$, where \begin{eqnarray*}
U_1&=&  \{(\theta_1,\theta_2)\in S^m\times S^m\mid ~~\theta_1\neq -\theta_2\},\\
U_2&=& \{(\theta_1,\theta_2)\in S^m\times S^m\mid ~~\theta_1\neq \theta_2\},
\end{eqnarray*} with local algorithms: 
$$ 
s_1(\theta_1,\theta_2)(t) = \dfrac{(1-t)\theta_1+t\theta_2}{\parallel (1-t)\theta_1+t\theta_2 \parallel} \text{ for all } (\theta_1,\theta_2)\in U_1.
$$ Before to define $s_2$, we can consider the subset $F:=\{(\theta_1,\theta_2)\in S^m\times S^m\mid ~~\theta_1= -\theta_2\} $ and for $(\theta_1,\theta_2)\in F$ define
$$
\alpha(\theta_1,\theta_2)(t) =  \begin{cases} 
s_1(\theta_1,v(\theta_1))(2t), &\hbox{ $0\leq t\leq\dfrac{1}{2}$;}\\
s_1(v(\theta_1),\theta_2)(2t-1), &\hbox{ $\dfrac{1}{2}\leq t\leq 1$.}
\end{cases} $$ Now, for all  $(\theta_1,\theta_2)\in U_2$, set
$$ 
s_2(\theta_1,\theta_2)(t) =  \begin{cases} 
s_1(\theta_1,-\theta_2)(2t), &\hbox{ $0\leq t\leq\dfrac{1}{2}$;}\\
\alpha(-\theta_2,\theta_2)(2t-1), &\hbox{ $\dfrac{1}{2}\leq t\leq 1$,}
\end{cases} $$

For our purposes, we present an optimal algorithm for even-dimensional spheres. Suppose $m$ is even. Let $\nu:S^m\to \mathbb{R}^{m+1}$ denote a fixed continuous tangent vector field on $S^{m}$, which vanishes at two points $-1,1\in S^m$ and is nonzero for any $x\in S^m-\{-1,1\}$. Indeed, we can take $$\nu(x_1,x_2,x_3,\ldots,x_{m},x_{m+1})=(0,-x_3,x_2,\ldots,-x_{m+1},x_{m}).$$ Note that $\nu(1,0,\ldots,0)=0$, $\nu(-1,0\ldots,0)=0$ and $\nu(x)\neq 0$ for any $x\in S^m-\{-1,1\}$.   

A planning algorithm to $S^m$ with $m$ even is given by $\kappa:=\{\kappa_i:V_i\to PS^m\}_{i=1}^{3}$, where \begin{eqnarray*}
V_1&=&  \{(\theta_1,\theta_2)\in S^m\times S^m\mid ~~\theta_1\neq -\theta_2\},\\
V_2&=& \{(\theta_1,\theta_2)\in S^m\times S^m\mid ~~\theta_1\neq \theta_2 \text{ and } \theta_2\neq -1,1\},
\end{eqnarray*} with local algorithms: 
$$ 
\kappa_1(\theta_1,\theta_2)(t) = \dfrac{(1-t)\theta_1+t\theta_2}{\parallel (1-t)\theta_1+t\theta_2 \parallel} \text{ for all } (\theta_1,\theta_2)\in V_1.
$$ Before to define $\kappa_2$, we can consider the subset $G:=\{(\theta_1,\theta_2)\in S^m\times S^m\mid ~~\theta_1= -\theta_2 \text{ and } \theta_2\neq -1,1\} $ and for $(\theta_1,\theta_2)\in G$ define
$$
\beta(\theta_1,\theta_2)(t) =  \begin{cases} 
\kappa_1(\theta_1,\nu(\theta_1))(2t), &\hbox{ $0\leq t\leq\dfrac{1}{2}$;}\\
\kappa_1(\nu(\theta_1),\theta_2)(2t-1), &\hbox{ $\dfrac{1}{2}\leq t\leq 1$.}
\end{cases} $$ Now, for all  $(\theta_1,\theta_2)\in V_2$, set
$$ 
\kappa_2(\theta_1,\theta_2)(t) =  \begin{cases} 
\kappa_1(\theta_1,-\theta_2)(2t), &\hbox{ $0\leq t\leq\dfrac{1}{2}$;}\\
\beta(-\theta_2,\theta_2)(2t-1), &\hbox{ $\dfrac{1}{2}\leq t\leq 1$,}
\end{cases} $$

Now, $V_1\cup V_2$ covers everything except the points $(-1,1)$ and $(1,-1)$. We recall the stereographic projection with respect to the north pole $p_N=(0,\ldots,0,1)$: \[p:S^m-\{p_N\}\to \mathbb{R}^m,~(x_1,\ldots,x_{m+1})=\left(\dfrac{x_1}{1-x_{m+1}},\ldots,\dfrac{x_m}{1-x_{m+1}}\right)\] whose inverse is given by \[q:\mathbb{R}^m\to S^m-\{p_N\},~y=(y_1,\ldots,y_m)=\left(\dfrac{2y_1}{\parallel y\parallel^2+1},\ldots,\dfrac{2y_m}{\parallel y\parallel^2+1},\dfrac{\parallel y\parallel^2-1}{\parallel y\parallel^2+1} \right).\] Set $Y=S^m-\{p_N\}$. This means that we may take $V_3=Y\times Y$. The algorithm $\kappa_3:V_3\to PS^m$ is given by 
\[\kappa(\theta_1,\theta_2)(t)= q\left((1-t)p(\theta_1)+t(\theta_2)\right) \text{ for } (\theta_1,\theta_2)\in V_3 \text{ and } t\in[0,1].\]
\end{example}

In this section we compute the value of $\text{TC}(f_{\mid})$ when \begin{enumerate}
    \item $f:(\mathbb{C}^n,0)\to (\mathbb{C},0)$ is a holomorphic function germ.
    \item $f:(\mathbb{R}^n,0)\to (\mathbb{R}^p,0)$, $n>p\geq 2$, is an analytic function germ with an isolated singular point at origin.
    \item $f_{\mid}:M(\mathcal{A})\to \mathbb{C}^\ast$ is a Milnor fibration of the central arrangement $\mathcal{A}$.
\end{enumerate}

\begin{theorem}
 We have $$\text{TC}(f_{\mid})=\begin{cases}
 2, & \hbox{ if $f$ is as (1), (3) or (2) with $p$ even;}\\
 3, & \hbox{ if $f$ is as (2) with $p$ odd.}
 \end{cases}$$. 
\end{theorem}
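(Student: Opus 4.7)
The plan is to invoke Lemma \ref{key-lem} in each case, which reduces $\text{TC}(f_\mid)$ to $\text{TC}(B)$ for $B$ the base of the relevant Milnor fibration, and then compute $\text{TC}(B)$ using Example \ref{spheres} together with the homotopy invariance of topological complexity.

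First, observe that each of the three maps under consideration is indeed a (locally trivial, hence Hurewicz) fibration: the Milnor tube fibration in cases (1) and (2) by (\cite{dutertre2016}, Corollary 2.5), and the arrangement fibration $Q:M(\mathcal{A})\to\mathbb{C}^\ast$ in case (3). Moreover, by Proposition \ref{prop-cross-complex}, Proposition \ref{prop-isolated-singularity}, and Proposition \ref{prop-cross-arrangement} respectively, each of these fibrations admits a continuous global cross-section. Hence the hypothesis of Lemma \ref{key-lem} is met, and we obtain
\[
\text{TC}(f_\mid)=\text{TC}(B),
\]
with $B=S^{1}_\delta$ in case (1), $B=S^{p-1}_\delta$ in case (2), and $B=\mathbb{C}^\ast$ in case (3).

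It remains to compute $\text{TC}(B)$ in each case. For (1) the base is $S^{1}$, so $\text{TC}(B)=2$ by Example \ref{spheres}. For (3), the base $\mathbb{C}^\ast$ deformation retracts onto $S^{1}$, so by homotopy invariance of $\text{TC}$ we again get $\text{TC}(B)=\text{TC}(S^{1})=2$. For (2) the base is $S^{p-1}$, and Example \ref{spheres} yields $\text{TC}(S^{p-1})=2$ when $p-1$ is odd (i.e.\ $p$ even) and $\text{TC}(S^{p-1})=3$ when $p-1$ is even (i.e.\ $p$ odd). Assembling these three computations gives the stated formula.

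The only nontrivial ingredient is Lemma \ref{key-lem}; everything else is just bookkeeping. There is no real obstacle, but one should be careful to check that each map is genuinely a fibration (not merely a surjection admitting a section), since Lemma \ref{key-lem} uses the fibration property crucially in the pullback step of its proof. The explicit tasking planning algorithms promised in Section \ref{tasking} can then be obtained by feeding the optimal planners $\{s_i\}$ and $\{\kappa_i\}$ of Example \ref{spheres} for $S^{p-1}$ through the construction in the proof of Lemma \ref{key-lem}: one composes with a fixed cross-section $s:S^{p-1}_\delta\to M(\delta,\epsilon)$ and uses the homotopy lifting property of $f_\mid$ to lift the spherical path planners to path planners on the Milnor tube.
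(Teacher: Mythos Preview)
Your proof is correct and follows the same approach as the paper: apply the cross-section results of Section~\ref{sect-cross-section} to meet the hypotheses of Lemma~\ref{key-lem}, reduce to $\text{TC}$ of the base, and then quote Example~\ref{spheres}. Your version is in fact a bit more careful than the paper's, since you explicitly handle case~(3) by noting that $\mathbb{C}^\ast\simeq S^1$ and invoking homotopy invariance of $\text{TC}$, whereas the paper tacitly passes to $\text{TC}(S^m)$ in all cases.
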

\begin{proof}
By Section \ref{sect-cross-section}, the Milnor fibration $f_{\mid}$ admits a continuous cross-section, then by Lemma \ref{key-lem}, the topological complexity $\text{TC}(f_{\mid})=\text{TC}(S^m)$ and thus the Example \ref{spheres} implies the result.
\end{proof}

\subsection{Tasking planning algorithms}

In this section, we present optimal tasking planning algorithms with $2$ and $3$ regions of continuity, respectively. These algorithms are easily implementable in practice.

Let $f:(\mathbb{R}^n,0)\to (\mathbb{R}^p,0)$, $n>p\geq 2$, be an analytic function germ with an isolated singular point at origin. We present a tasking planning algorithm for the Milnor fibration $f_{\mid}:M(\delta,\epsilon)\to S^{p-1}$. Recall the following pullbacks:
\begin{eqnarray*}
\xymatrix{ B^\prime\times_B E \ar@/^10pt/[drr]^{\,\,q_2} \ar@/_10pt/[ddr]_{q_1} \ar@{-->}[dr]_{H} &   &  &\\
& PM(\delta,\epsilon) \ar[r]^{\,\,(f_{\mid})_{\#}} \ar[d]_{e_{f_{\mid}}} & PS^{p-1} \ar[d]^{e_{0,1}} & \\
       & M(\delta,\epsilon)\times S^{p-1}   \ar[r]_{\quad f_{\mid}\times 1\quad} &  S^{p-1}\times S^{p-1} &}
\end{eqnarray*} where $B^\prime=M(\delta,\epsilon)\times S^{p-1}$, $B=S^{p-1}\times S^{p-1}$ and $E= PS^{p-1}$. Recall that $H$ is given by the following commutative diagram (see Remark \ref{fibration-implies-pullback}):
\begin{eqnarray*}
\xymatrix{ X \ar[r]^{\,\,p_{1}\circ q_1} \ar[d]_{i_0} & M(\delta,\epsilon) \ar[d]^{f_{\mid}} \\
       X\times I\ar@{-->}[ur]_{H}  \ar[r]_{\,\,q_2} &  S^{p-1}}
\end{eqnarray*} where $X=B^\prime\times_B E$.

\subsubsection{For $p$ even} From Lemma \ref{key-lem} and Lemma \ref{pullback}, the optimal algorithm $s:=\{s_i:U_i\to PS^{p-1}\}_{i=1}^{2}$ on the sphere $S^{p-1}$ (see Example \ref{spheres}) induces an optimal tasking algorithm for $f_{\mid}$, say $\hat{s}:=\{\hat{s}_i:\widehat{V}_i\to PM(\delta,\epsilon)\}_{i=1}^{2}$, where $\widehat{V}_i=(f_{\mid}\times 1)^{-1}(U_i)\subset M(\delta,\epsilon)\times S^{p-1}$ and $\hat{s}_i(v)=H(v,s_i\circ(f_{\mid}\times 1)(v))$.

\subsubsection{For $p$ odd} Again, from Lemma \ref{key-lem} and Lemma \ref{pullback}, the optimal algorithm $\kappa:=\{\kappa_i:V_i\to PS^{p-1}\}_{i=1}^{3}$ on the sphere $S^{p-1}$ (see Example \ref{spheres}) induces an optimal tasking algorithm for $f_{\mid}$, say $\hat{\kappa}:=\{\hat{\kappa}_i:\widehat{V}_i\to PM(\delta,\epsilon)\}_{i=1}^{3}$, where $\widehat{V}_i=(f_{\mid}\times 1)^{-1}(V_i)\subset M(\delta,\epsilon)\times S^{p-1}$ and $\hat{s}_i(v)=H(v,\kappa_i\circ(f_{\mid}\times 1)(v))$.

\bibliographystyle{plain}

\begin{thebibliography}{10}

\bibitem{bajdRobotics} T. Bajd, M. Mihelj, J. Lenarcic, A. Stanovnik and M. Munih, {\it 'Robotics'}, International series on intelligent systems, control, and automation:Science and engineering. {\bf 43} (2010).

\bibitem{cornea2003lusternik} Cornea, Octav and Lupton, Gregory and Oprea, John and Tanr{\'e}, Daniel, {\it Lusternik-Schnirelmann category}. American Mathematical Society. {\bf 103} (2003).

\bibitem{dutertre2016} N. Dutertre and R. N. Araújo dos Santos, {\it 'Topology of real Milnor fibrations for non-isolated singularities}, Int. Math. Res. Not, {\bf 16} (2016), 4849--4866.

\bibitem{farber2003topological} M. Farber, {\it `Topological complexity of motion planning'}, Discrete and Computational Geometry. {\bf 29} (2003), no. 2, 211--221.

\bibitem{james1970decomposability} James, I M, {\it On the decomposability of fibre spaces}. The Steenrod Algebra and Its Applications: A Conference to Celebrate NE Steenrod's Sixtieth Birthday. Springer. (1970), 125--134.

\bibitem{latombe2012robot}  J.-C. Latombe, {\it `Robot motion planning'} (Springer, New York, 1991). 

\bibitem{lavalle2006planning} S. M. LaValle, {\it `Planning algorithms'}  (Cambridge University Press, Cambridge, 2006).

\bibitem{milnor1968} J. Milnor, {\it 'Singular points of complex hypersurfaces}, Ann. of Math. Studies, Vol. 61,
Princeton University Press, 1968.

\bibitem{pavesic} P. Pavesic, {\it `A Topologist’s View of Kinematic Maps and Manipulation Complexity'}, Contemp. Math. {\bf 702}(2018), 61--83.

\bibitem{pavesic2019} P. Pavesic, {\it `Topological complexity of a map'}, Homology, Homotopy and Applications, vol. 21(2), 2019, 107--130.

\bibitem{suciu2017} A. I. Suciu, {\it 'On the topology of the Milnor fibratiom of a hyperplane arrangement',} Rev. Roumaine Math. Pures Appl. {\bf 62} (2017), 1, 191--215.

\bibitem{schwarz1958genus} Schwarz, AS, {\it The genus of a fiber space}. Dokl. Akad. Nauk SSSR (NS). {\bf 119} (1958), 219--222.


\end{thebibliography}

\end{document}